\newtheorem{lemma}{Lemma}
\newtheorem{prop}[lemma]{Proposition}
\newtheorem{cor}[lemma]{Corollary}
\newtheorem{defi}[lemma]{Definition}
\newtheorem{thm}[lemma]{Theorem}
\newtheorem{ex}[lemma]{Example}
\newtheorem{rmk}[lemma]{Remark}
\newcommand{\ol}{\overline}
\newcommand{\uh}{\mathcal{U}(\mathfrak{h})}
\newcommand{\ug}{\mathcal{U}(\mathfrak{g})}
\newcommand{\poln}{\mathbb{C}[h_{1}, \ldots , h_{n}]}
\newcommand{\hstar}{\mathfrak{h}^{*}}
\newcommand{\olambda}{\overline{\lambda}}
\newcommand{\bb}[1]{\mathbb{#1}}
\newcommand{\ca}[1]{\mathcal{#1}}
\newcommand{\ep}{\epsilon}
\begin{document}

\title{$\uh$-free modules and coherent families}
\author{Jonathan Nilsson}
\date{}
\maketitle

\begin{abstract}
\noindent We investigate the category of $\uh$-free $\mathfrak{g}$-modules.
 Using a functor from this category to the category of coherent families, we show that
 $\uh$-free modules only can exist when $\mathfrak{g}$ is of type $A$ or $C$.
 We then proceed to classify isomorphism classes of $\uh$-free modules of rank $1$ in type $C$,
 which includes an explicit construction of new simple $\mathfrak{sp}(2n)$-modules.
 Finally, we show how translation functors can be used to obtain simple $\uh$-free modules of higher rank.
\end{abstract}

\section{Introduction}
\label{sec1}
In order to understand the structure of a given module category, a classification of simple modules is very helpful.
However, when $\mathfrak{g}$ is a finite-dimensional simple complex Lie algebra, a classification of simple modules seems beyond reach; 
only when $\mathfrak{g} = \mathfrak{sl}_{2}$ a weak version of such a classification exists, see \cite{Bl,Ma}.
However, some classes of simple $\mathfrak{g}$-modules are well understood. For example, simple weight modules
with finite-dimensional weight spaces are completely classified, they fall into two categories: parabolically induced modules
 and cuspidal modules. Parabolically induced modules include simple finite-dimensional modules\cite{Ca,Dix}, and more generally simple highest weight modules\cite{Dix,Hum2,BGG}.
Simple cuspidal modules were classified by Mathieu in 2000, see~\cite{Mathieu}.
Other well studied classes of simple modules include Whittaker modules\cite{Kos}, and Gelfand-Zetlin modules\cite{DFO}.

Another natural class of modules are the ones where the Cartan subalgebra acts freely.
 Specifically, we let $\mathfrak{M}$ be the full subcategory of $\mathfrak{g}$-Mod consisting of
 modules $M$ such that $Res_{\mathfrak{h}}^{\mathfrak{g}} M \simeq _{\uh} \uh$. In other words, $\mathfrak{M}$ consists of the modules which are free of rank $1$ as $\uh$-modules.
In the paper~\cite{Ni}, isomorphism classes of the objects of $\mathfrak{M}$ were classified for $\mathfrak{g} = \mathfrak{sl}_{n}$ which led to several new families of simple $\mathfrak{sl}_{n}$-modules.
 Some of these modules were also studied in connection to the Witt algebra in~\cite{TZ1}, and classifications of $\uh$-free modules over different Witt algebras were obtained in \cite{TZ2}.

In the present paper, we focus on a similar classification of the category $\mathfrak{M}$ in type $C$. We start by explicitly constructing an object $M_{0}$ of $\mathfrak{M}$, and we proceed to show that
every other isomorphism class of $\mathfrak{M}$ can be obtained by twisting $M_{0}$ by an automorphism.
This result is achieved by considering the connection between $\mathfrak{M}$ and the coherent
 families of degree $1$. We construct a functor between these categories, and can then rely on the classification of irreducible semisimple coherent families from~\cite{Mathieu} to classify $\mathfrak{M}$.
This line of argument also directly shows that the category $\mathfrak{M}$ is empty for finite-dimensional simple complex Lie algebras of all types other than $A$ and $C$.
 This then completes the classification of $\mathfrak{M}$ for all such Lie algebras. To summarize, we have the following results about the category $\mathfrak{M}$ for simple complex finite-dimensional Lie algebras:

\begin{itemize}
 \item  The category $\mathfrak{M}$ is empty unless $\mathfrak{g}$ is of type $A$ or type $C$.
 \item When $\mathfrak{g}$ is of type $C$, there exists an object $M_{0} \in \mathfrak{M}$ (definition in Theorem~\ref{mnolldef}) such that any object of $\mathfrak{M}$ is isomorphic to $M_{0}^{\varphi}$ (twist by automorphism) for
 some explicitly given $\varphi \in Aut(\mathfrak{g})$. See Theorem~\ref{mainthm}.
 \item When $\mathfrak{g}$ is of type $A_{n}$, a classification of $\mathfrak{M}$ was obtained in~\cite{Ni}. In the context of this paper, this would be formulated as: there exist an explicitly given family of modules
 $\{M_{b}^{S}\}$  parametrized by $b \in \bb{C}$ and $S \subset \{1, \ldots, n\}$ such that for any object $M$ of $\mathfrak{M}$, there exist $\varphi \in Aut(\mathfrak{g})$ 
 such that $M^{\varphi}$ is isomorphic to some $M_{b}^{S}$. See~\cite{Ni} for details.
\end{itemize}

Here follows a brief summary of the paper. 
 Section~\ref{sec2} deals with the relationship between $\uh$-free modules and coherent families.
 In Section~\ref{sec2dot1} we briefly discuss the category $\mathfrak{M}$ and give an example of one of its objects. Section~\ref{sec3} reminds the reader of the notion of a coherent family, and it lists some
 known results about these. In Section~\ref{weighting} we construct an endofunctor $\ca{W}$ on
 $\mathfrak{g}$-Mod and prove that its image of $\mathfrak{M}$ lies in the set of
 coherent families of rank $1$, which proves the first point above.
 Section~\ref{sec5} deals with the classification of $\mathfrak{M}$ in type $C$. In Section~\ref{sec3dot2} we explicitly construct a simple object $M_{0}$ of
 $\mathfrak{M}$, and in Section~\ref{sec3dot3} we proceed by describing the submodule structure of $\ca{W}(M_{0})$ and of its semisimplification $\ca{W}(M_{0})^{ss}$. 
 Section~\ref{sec3dot4} discusses twisting modules by a family of automorphisms of $\mathfrak{g}$, which eventually
 leads to the proof in Section~\ref{sec3dot5} of the second point above. Finally, in Section~\ref{sec6} we show that by applying translation functors (see~\cite{BG}) to $\mathfrak{M}$ in type $C$, we can obtain
 simple modules which is $\uh$-free of finite rank higher than one. This provides a more general but less explicit construction of a larger category of modules.\\

\noindent
{\bf Acknowledgements}
I am very thankful to Volodymyr Mazorchuk for his many helpful remarks and ideas.
I am also particularly grateful to Professor Olivier Mathieu who, during his visit to Uppsala, suggested the connection between $\uh$-free modules and coherent families.
The results of this paper are based on this connection. The remarks of the referee were also appreciated.

\section{$\uh$-free modules and coherent families}
\label{sec2}
\subsection{Modules where the Cartan acts freely}
\label{sec2dot1}
Let $\mathfrak{g}$ be a finite-dimensional simple complex Lie algebra with a fixed Cartan subalgebra $\mathfrak{h}$. Denote by $\Delta$ the root system and let $Q:=\bb{Z}\Delta$ be the root lattice.
We denote the category of all $\ug$-modules by $\ug$-Mod or sometimes just $\mathfrak{g}$-Mod. Denote by $\mathfrak{M}$ the full subcategory of $\ug$-Mod consisting of modules whose restriction to $\uh$ is free of rank one. When $\mathfrak{g}$ is realized as
 a Lie algebra of matrices, we use the notation $e_{i,j}$ to denote the matrix with a single $1$ in position $(i,j)$ and zeroes everywhere else.
\begin{ex}
\label{ex1}
{\rm 
Let $\mathfrak{g}=\mathfrak{sl}_{2}$ and let $h:=\frac{1}{2}(e_{1,1}-e_{2,2})$. Then $\bb{C}[h]$ becomes an $\mathfrak{sl}_{2}$-module under the action given by
\begin{align*}
 h \cdot f(h) &= hf(h)\\
 e_{1,2} \cdot f(h) &= hf(h-1)\\
 e_{2,1} \cdot f(h) &= -hf(h+1).
\end{align*}
Clearly this module lies in $\mathfrak{M}$.
}
\end{ex}

We shall often refer to modules of $\mathfrak{M}$ as $\uh$-free modules of rank one. 
\begin{lemma}
\label{rootact}
 Let $M \in \mathfrak{M}$ and identify $M$ with $\uh$ as vector spaces (and $\uh$-modules), and let $x_{\alpha} \in \mathfrak{g}_{\alpha}$ be a root vector. We then have
\[x_{\alpha} \cdot f = (x_{\alpha} \cdot 1) \sigma_{\alpha}(f),\]
where $\sigma_{\alpha}$ is the algebra automorphism on $\uh$ satisfying $\sigma_{\alpha}(h) = h-\alpha(h)$ for all $h\in \mathfrak{h}$.
\end{lemma}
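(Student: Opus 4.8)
The key structural fact about modules in $\mathfrak{M}$ is that $\mathfrak{h}$ acts by multiplication-like operators, and the root vectors must intertwine this action in the appropriate twisted way. So the proof is essentially a computation inside $\ug$ using the commutation relations $[h, x_\alpha] = \alpha(h) x_\alpha$, together with the identification $M \simeq_{\uh} \uh$ which lets us write every element of $M$ uniquely as $f = f \cdot 1$ for $f \in \uh = \poln$.

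First I would fix the isomorphism $M \simeq_{\uh} \uh$ and regard $M$ as $\uh$ itself with the standard left regular action of $\mathfrak{h}$. The element $x_\alpha \cdot 1 \in M = \uh$ is then some fixed polynomial in the $h_i$; call it $p_\alpha$. The claim is that for arbitrary $f \in \uh$ we have $x_\alpha \cdot f = p_\alpha \cdot \sigma_\alpha(f)$, where $\sigma_\alpha(h) = h - \alpha(h)$. Since both sides are additive and multiplicative-compatible in the right way, it suffices to check this for $f = h \in \mathfrak{h}$ a generator and then extend to all monomials by induction. For $f = h$, I compute $x_\alpha \cdot (h \cdot 1)$ in two ways: on one hand it is $x_\alpha \cdot h$; on the other hand, using $x_\alpha h = h x_\alpha - \alpha(h) x_\alpha$ in $\ug$, we get $x_\alpha \cdot (h \cdot 1) = (h - \alpha(h)) \cdot (x_\alpha \cdot 1) = (h - \alpha(h)) p_\alpha = p_\alpha \cdot (h - \alpha(h)) = p_\alpha \cdot \sigma_\alpha(h)$, using that $\uh$ is commutative so multiplication by $p_\alpha$ commutes with multiplication by $(h - \alpha(h))$.

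For the inductive step, suppose $x_\alpha \cdot f = p_\alpha \cdot \sigma_\alpha(f)$ holds for some $f \in \uh$; I want the same for $hf$ with $h \in \mathfrak{h}$. Again using $x_\alpha (hf) = (hf) x_\alpha - \alpha(h) f x_\alpha$ — more precisely $x_\alpha h = (h-\alpha(h))x_\alpha$ so that $x_\alpha \cdot (h f \cdot 1) = (h - \alpha(h)) \cdot (x_\alpha \cdot (f\cdot1))$ — we get $x_\alpha \cdot (hf) = (h - \alpha(h)) \cdot \big(p_\alpha \sigma_\alpha(f)\big) = p_\alpha \cdot \big((h-\alpha(h))\sigma_\alpha(f)\big) = p_\alpha \cdot \sigma_\alpha(hf)$, where the last equality is because $\sigma_\alpha$ is an algebra homomorphism with $\sigma_\alpha(h) = h - \alpha(h)$, and again commutativity of $\uh$ lets us move $p_\alpha$ past $(h - \alpha(h))$. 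Since the monomials $h_{i_1}\cdots h_{i_k}$ span $\uh$ and both sides are linear in $f$, this completes the induction and hence the proof.

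**Main obstacle.** There is no serious obstacle here; this is a routine bookkeeping argument. The only point requiring a little care is making sure the automorphism $\sigma_\alpha$ is well-defined — i.e., that $h \mapsto h - \alpha(h)$ (a linear map $\mathfrak{h} \to \uh$, since $\alpha(h)$ is a scalar) extends to an algebra automorphism of $\poln = \uh$; this is immediate because $\uh$ is a polynomial algebra and the map sends each generator to a degree-$\le 1$ element whose linear parts are still linearly independent, with inverse $h \mapsto h + \alpha(h)$. One should also note that the statement is independent of the choice of root vector $x_\alpha \in \mathfrak{g}_\alpha$ only up to the corresponding rescaling of $p_\alpha = x_\alpha \cdot 1$, which is consistent with the formula. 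Everything else is forced by the relations in $\ug$ together with commutativity of $\uh$.
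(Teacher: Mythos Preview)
Your proof is correct and follows exactly the approach indicated in the paper, namely induction on $\deg f$ using the relation $x_\alpha h = (h-\alpha(h))x_\alpha$ in $\ug$; the paper simply records this as a one-line remark while you have written out the details. No changes are needed.
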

\begin{proof}
Let $h_{1}, \ldots, h_{n}$ be a basis for $\mathfrak{h}$ such that $\uh \simeq \bb{C}[h_{1}, \ldots, h_{n}]$. It suffices to prove the lemma for all monomials $f \in  \bb{C}[h_{1}, \ldots, h_{n}]$.
For $f=1$ the lemma is trivially true. Assume the lemma holds for all monomials $f$ of (total) degree $k$.
Then for any $f$ of degree $k$, and for any $h_{i}$, we compute
\begin{align*}
 X_{\alpha} \cdot (h_{i}f) &=   X_{\alpha} \cdot (h_{i} \cdot f) \\
 &=  h_{i} \cdot X_{\alpha} \cdot  f + [X_{\alpha} , h_{i}] \cdot f\\
 &=  h_{i} \cdot X_{\alpha} \cdot  f - \sigma_{\alpha}(h_{i})(X_{\alpha} \cdot f)\\
 &=  (h_{i}- \sigma_{\alpha}(h_{i}))  (X_{\alpha} \cdot  f)\\
 &=  (h_{i}- \sigma_{\alpha}(h_{i}))  (X_{\alpha} \cdot  1)\sigma_{\alpha}(f)\\
 &=   (X_{\alpha} \cdot  1)\sigma_{\alpha}(h_{i}f),
\end{align*}
which shows that the lemma also holds for all monomials $f$ of degree $k+1$. By induction the Lemma holds.
\end{proof}
Thus the action of $\ug$ on $M$ is completely determined by the elements $\{x_{\alpha} \cdot 1 | \alpha \in \Delta\}$.
If $\{\epsilon_{1}, \ldots, \epsilon_{n}\}$ is a fixed ordered basis of $\hstar$, we shall write $\sigma_{i}:=\sigma_{\epsilon_{i}}$.\\ 
\subsection{Coherent families}
\label{sec3}
The theory of coherent families was used by Mathieu in 2000 to classify cuspidal weight modules with finite-dimensional weight spaces, see~\cite{Mathieu}.
We shall restate some of the known properties of coherent families. For details we refer to~\cite{Mathieu}.

\begin{defi}\label{cohdef}
Let $\ug_{0}$ be the commutant of $\mathfrak{h}$ in $\ug$. A {\bf coherent family} of degree $d$ is a weight module $\ca{M}=\bigoplus_{\lambda \in \mathfrak{h}^{*}} \ca{M}_{\lambda}$ such that:
\begin{itemize}
 \item $\dim \ca{M}_{\lambda} = d$ for all $\lambda \in \mathfrak{h}^{*}$.
 \item For any $u \in \ug_{0}$, the function $Tr \; u|_{\ca{M}_{\lambda}}$ is polynomial in $\lambda$.  
\end{itemize}
\end{defi}

For each $\lambda \in \mathfrak{h}^{*}$ we shall write $\overline{\lambda}$ for the unique algebra homomorphism $\uh \rightarrow \mathbb{C}$ extending $\lambda$.
Note that the second condition of Definition~\ref{cohdef} can be reformulated as follows: for any $u\in \ug_{0}$ there exist $f_{u} \in \uh$
such that  $Tr\; u|_{\ca{M}_{\lambda}} = \overline{\lambda}(f_{u})$ for all $\lambda \in \hstar$.

An example of a coherent family for $\mathfrak{sl}_{2}$ is given below.
\begin{ex}
\label{ex2}
{\rm 
 Fix $a \in \bb{C}$. Let $\ca{M}(a)$ be the vector space with basis  $\{ v_{\lambda} | \lambda \in \bb{C}\}$.
Define an action of $\mathfrak{sl}_{2}$ on $\ca{M}(a)$ as follows:
\begin{align*}
 (e_{1,1}-e_{2,2}) \cdot v_{\lambda} &= 2\lambda v_{\lambda}\\
 e_{1,2} \cdot v_{\lambda} &= (a+\lambda) v_{\lambda+1}\\
 e_{2,1} \cdot v_{\lambda} &= (a-\lambda) v_{\lambda-1}.
\end{align*}
This is a coherent family of degree $1$. It is isomorphic to the module described in~\cite[p. 549]{Mathieu}.
}
\end{ex}
If $\ca{M}$ is a coherent family (or more generally any weight module) and $S\subset \hstar$ we define 
\[\ca{M}[S]:=\bigoplus_{\lambda \in S} \ca{M}_{\lambda}.\]
For $\mu \in \hstar$ we shall also write $\ca{M}[\mu]:= \ca{M}[\mu + Q]$. Note that in this notation we have $\ca{M}[\{\mu\}] \neq \ca{M}[\mu]$; the left side is
the single weight space $\ca{M}_{\mu}$, while the right side is a submodule of $\ca{M}$ containing $\ca{M}_{\mu}$ as is seen in the following lemma.
\begin{lemma}
\label{cosetsub}
 Let $\ca{M}$ be a coherent family. For each $\mu \in \hstar$, $\ca{M}[\mu]$ is a submodule of $\ca{M}$ and
\[\ca{M} = \bigoplus_{\mu \in h^{*} / Q} \ca{M}[\mu]. \]
\end{lemma}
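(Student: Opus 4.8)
The plan is to show two things: first that each $\ca{M}[\mu]$ is a submodule, and second that the whole family decomposes as the direct sum of these pieces over cosets of $Q$ in $\hstar$.

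For the submodule claim, I would argue directly from the weight space structure. A root vector $x_\alpha \in \mathfrak{g}_\alpha$ sends a weight vector of weight $\lambda$ to one of weight $\lambda + \alpha$, and since $\alpha \in Q$, we have $\lambda + \alpha \in \mu + Q$ whenever $\lambda \in \mu + Q$. Thus $\mathfrak{g}_\alpha \cdot \ca{M}[\mu] \subseteq \ca{M}[\mu]$ for every root $\alpha$, and similarly $\mathfrak{h} \cdot \ca{M}[\mu] \subseteq \ca{M}[\mu]$ trivially since each $\ca{M}_\lambda$ is preserved by $\mathfrak{h}$. Since $\mathfrak{g}$ is generated by $\mathfrak{h}$ together with the root spaces $\mathfrak{g}_\alpha$, it follows that $\ug \cdot \ca{M}[\mu] \subseteq \ca{M}[\mu]$, so $\ca{M}[\mu]$ is a submodule. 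This is the easy part and requires only the root space decomposition of $\mathfrak{g}$.

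For the decomposition, I would observe that as a vector space $\ca{M} = \bigoplus_{\lambda \in \hstar} \ca{M}_\lambda$ by definition of a weight module, and the cosets $\{\mu + Q : \mu \in \hstar/Q\}$ partition $\hstar$. Grouping the weight spaces according to which coset their weight lies in gives $\ca{M} = \bigoplus_{\mu \in \hstar/Q} \bigl( \bigoplus_{\lambda \in \mu + Q} \ca{M}_\lambda \bigr) = \bigoplus_{\mu \in \hstar/Q} \ca{M}[\mu]$ as vector spaces; since each summand is a submodule, this is a direct sum decomposition as $\ug$-modules. I should note that the statement as written says $\ca{M} = \bigoplus_{\mu \in h^*/Q} \ca{M}[\mu]$, so I'd just need to make sure the indexing set is read as $\hstar/Q$ and that $\ca{M}[\mu]$ is well-defined independent of the representative $\mu$ of the coset (which is immediate since $\mu + Q$ depends only on the coset).

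I do not anticipate a genuine obstacle here — this is essentially the standard observation that a weight module over $\mathfrak{g}$ splits along cosets of the root lattice, and the coherence hypothesis plays no role in this particular lemma (it is being recorded for later use on coherent families specifically). The only point requiring a modicum of care is confirming that $\mathfrak{g}$ is generated as a Lie algebra by its Cartan and root subspaces, which is standard, so the decomposition into submodules is respected by the full action and not merely by $\mathfrak{h}$.
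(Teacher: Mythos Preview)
Your proposal is correct and follows essentially the same approach as the paper: the paper's proof is a single sentence noting that $x_{\alpha}\ca{M}_{\lambda} \subset \ca{M}_{\lambda+\alpha}$, so each coset of $Q$ in $\hstar$ corresponds to a submodule. Your version simply spells out the details (including the trivial $\mathfrak{h}$-stability and the vector space decomposition over cosets), and you are right that the coherence hypothesis is not used here.
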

\begin{proof}
 Since $x_{\alpha} \ca{M}_{\lambda} \subset \ca{M}_{\lambda + \alpha}$ it is clear that each coset of $Q$ (the root lattice) in $\mathfrak{h}^{*}$ corresponds to a submodule of the coherent family.
\end{proof}

Note that the components $\ca{M}[\mu]$ above are not necessarily simple themselves. Consider for example the submodule $\ca{M}(0)[0]$ from Example~\ref{ex2}; it has length $3$.
However, we have the following useful proposition, see~\cite[p.553--554]{Mathieu}.
\begin{prop}
Let $\ca{M}$ be a coherent family.
\begin{enumerate}[$($i$)$]
 \item  For any $\mu \in \hstar$, the module $\ca{M}[\mu]$ has finite length.
 \item   There exists a unique semisimple coherent family $\ca{M}^{ss}$ such that for each $\mu \in \mathfrak{h}^{*}$, the modules
 $\ca{M}[\mu]$ and $\ca{M}^{ss}[\mu]$ have the same simple subquotients. 
\end{enumerate}
  
\end{prop}
The coherent family $\ca{M}^{ss}$ is called the semisimplification of $\ca{M}$. For coherent families of degree $1$ the construction of $\ca{M}^{ss}$ from $ \ca{M}$ can be realized as follows:
For every $\lambda \in \mathfrak{h}$ where the action of $x_{\alpha}$ on $\ca{M}_{\lambda}$ is zero, modify the action of $x_{-\alpha}$ to be zero on $\ca{M}_{\lambda+\alpha}$.
A coherent family is called irreducible if $\ca{M}_{\lambda}$ is simple as a $\ug_{0}$-module for some $\lambda$. For example, nontrivial direct sums of coherent families are still coherent families
 but they are no longer irreducible. 

Semisimple irreducible coherent families are classified in~\cite{Mathieu}. We recall two results, see~\cite[Lemma 5.3, Remark p.586]{Mathieu}.
\begin{prop}
\label{cohprop} We have:
\begin{enumerate}[$($i$)$]
 \item Coherent families exist for Lie algebras of type $A$ and $C$ only.
 \item For a Lie algebra of type $C_{n}$ ($n \geq 2$), there exists a unique semisimple irreducible coherent family of degree $1$ up to isomorphism.
\end{enumerate}
\end{prop}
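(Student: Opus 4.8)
\medskip
\noindent\textbf{Proof proposal.} Both statements are theorems of Mathieu, recalled here; I sketch the argument one would follow. For \emph{(i)} the plan is to reduce the existence of a coherent family to the existence of a simple torsion-free module and then invoke the combinatorial classification of the latter. Given a nonzero coherent family $\ca{M}$, its semisimplification $\ca{M}^{ss}$ is again a nonzero coherent family and decomposes into irreducible semisimple summands; fix one of them, $\ca{N}$. For each root $\alpha$ the coherence axiom (Definition~\ref{cohdef}) makes the coefficients of the characteristic polynomial of $x_{-\alpha}x_\alpha|_{\ca{N}_\lambda}$ polynomial in $\lambda$, since powers of weight-zero elements are again weight-zero; hence $\lambda\mapsto\det\bigl(x_{-\alpha}x_\alpha|_{\ca{N}_\lambda}\bigr)$ is a polynomial, not identically zero because $\mathfrak{g}$ is simple. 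So there is a proper closed subset of $\hstar$ outside which every $x_\alpha$ acts bijectively, and a generic coset $\mu\in\hstar/Q$ lies in its complement; for such $\mu$ the module $\ca{N}[\mu]$ is torsion-free with all weight spaces of dimension $\deg\ca{N}$, and — $\ca{N}$ being irreducible, so that $\ca{N}_\lambda$ is simple over $\ug_0$ for generic $\lambda$ — it is simple. Thus $\mathfrak{g}$ admits a simple torsion-free module with finite-dimensional weight spaces, and it remains to show this forces type $A$ or $C$.

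That last implication is Fernando's theorem, and reproducing it is the genuine obstacle: it is a structural fact about root systems, not a formal consequence of the coherence axioms. The approach would be: torsion-freeness is inherited by restriction to any Levi subalgebra containing $\mathfrak{h}$, so one reduces to small rank and rules out all types except $A$ and $C$; this is delicate because $\mathfrak{so}(5)\cong\mathfrak{sp}(4)$ (type $B_2=C_2$) \emph{does} carry simple torsion-free modules, so one cannot argue merely from root-string lengths and must instead use Fernando's finer analysis of the support of a simple torsion-free module (which has to be a full coset of $Q$) together with the classification of parabolic subalgebras. I would cite this rather than re-derive it. In the degree-$1$ case, which is all that is ultimately needed, there is however a self-contained shortcut: by Lemma~\ref{rootact} a degree-$1$ coherent family is completely encoded by the polynomials $x_\alpha\cdot 1\in\uh$, and imposing the defining relations of $\mathfrak{g}$ turns this into a finite system of polynomial identities that is solvable only in types $A$ and $C$ — the computation behind the classification in~\cite{Ni}.

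For \emph{(ii)} the plan separates existence from uniqueness. Existence: exhibit one semisimple irreducible coherent family of degree $1$ in type $C_n$ — concretely the one obtained from the Weyl-algebra (oscillator) realization of $\mathfrak{sp}(2n)$ by twisted localization, which will reappear in Section~\ref{sec3dot3} as $\ca{W}(M_0)^{ss}$. Uniqueness: a semisimple coherent family is determined by its simple subquotients coset by coset (uniqueness clause of the semisimplification proposition), so it suffices to check that any two semisimple irreducible degree-$1$ coherent families of $\mathfrak{sp}(2n)$ have the same simple subquotients in every coset. This reduces to classifying the simple torsion-free $\mathfrak{sp}(2n)$-modules with one-dimensional weight spaces and observing that — unlike in type $A$, where a continuous parameter survives — the coherence condition relating the different cosets rigidifies the answer to a single isomorphism class. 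The obstacle here is that this uniqueness is not formal: it needs the actual list of these simple modules, so in practice one cites Mathieu's Lemma~5.3 and the remark on p.~586.
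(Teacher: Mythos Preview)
The paper does not prove this proposition at all: it is stated as a recalled result from Mathieu's classification, with the citation ``see~\cite[Lemma 5.3, Remark p.586]{Mathieu}'' serving as the entire justification. Your sketch therefore goes considerably further than the paper itself, and the outline you give---reduce to the existence of a simple torsion-free module via semisimplification, then invoke Fernando's theorem for (i); exhibit the oscillator family and appeal to the classification of degree-$1$ simple torsion-free modules for (ii)---is indeed the route taken in \cite{Mathieu}, and your honest identification of Fernando's theorem and the explicit list in type $C$ as the non-formal inputs is accurate.

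One small slip: Lemma~\ref{rootact} is stated for $\uh$-free modules, not for coherent families, so invoking it to say that a degree-$1$ coherent family is encoded by elements $x_\alpha\cdot 1\in\uh$ is not literally correct. The analogous statement for a degree-$1$ coherent family is that the action is encoded by polynomial \emph{functions} on $\hstar$ (via the trace condition in Definition~\ref{cohdef}), which plays the same role; but that is a separate observation, not Lemma~\ref{rootact}. This does not affect the validity of your overall plan, only the phrasing of the shortcut you mention for the degree-$1$ case.
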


\subsection{Weighting functor}
\label{weighting}
The following construction was suggested by Olivier Mathieu as a comment on~\cite{Ni}. 
Denote by $Max(\uh)$ the set of maximal ideals of the algebra $\uh$. For $M$ in $\ug$-Mod, 
consider the $\uh$-module \[\ca{W}(M) := \bigoplus_{\mathfrak{m} \in Max(\uh)} M / \mathfrak{m}M = \bigoplus_{\lambda \in \mathfrak{h}^{*}} M / ker(\overline{\lambda})M.\]

\begin{prop}
 $\ca{W}(M)$ becomes an $\ug$-module by defining the action of root vectors as follows: 
\begin{equation}
\label{actiondef}
 x_{\alpha} \cdot (v + ker(\overline{\lambda})M):= (x_{\alpha} \cdot v) + ker(\overline{\lambda + \alpha})M.
\end{equation}
Moreover, the assignment \[\ca{W}:M \mapsto \bigoplus_{\mathfrak{m} \in Max(\uh)} M / \mathfrak{m}M\]
 is functorial: We have a functor $\ca{W}: \ug\text{-Mod} \rightarrow \ug\text{-Mod}$, which maps a homomorphism $f: M \mapsto N$ to the homomorphism $\ca{W}(f)$ defined by $\ca{W}(f): m+\mathfrak{m}M \mapsto f(m)+\mathfrak{m}N$.
\end{prop}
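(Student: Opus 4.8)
The plan is as follows. I first pin down the $\uh$-action on $\ca{W}(M)$: since $\uh$ is commutative, $ker(\overline{\lambda})M$ is a $\uh$-submodule of $M$, and $\uh$ acts on $M/ker(\overline{\lambda})M$ through the character $\overline{\lambda}$; hence $\ca{W}(M) = \bigoplus_{\lambda}\ca{W}(M)_{\lambda}$ with $\ca{W}(M)_{\lambda} = M/ker(\overline{\lambda})M$ is a weight $\mathfrak{h}$-module (under the identification $Max(\uh)\leftrightarrow\hstar$, $\mathfrak{m}\leftrightarrow ker(\overline{\lambda})$, which is just the Nullstellensatz for $\uh=\bb{C}[h_{1},\ldots,h_{n}]$). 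Because $\mathfrak{g} = \mathfrak{h} \oplus \bigoplus_{\alpha}\mathfrak{g}_{\alpha}$, this together with (\ref{actiondef}) extends by linearity to a map $\mathfrak{g} \to \mathrm{End}_{\bb{C}}(\ca{W}(M))$, and it remains to show (a) that (\ref{actiondef}) is well defined, and (b) that the resulting map is a Lie algebra homomorphism; for (b) it suffices to check the bracket relations on the basis of $\mathfrak{g}$ consisting of a basis of $\mathfrak{h}$ together with one root vector per root.

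For (a): the key identity is the commutation relation $x_{\alpha}u = \sigma_{\alpha}(u)x_{\alpha}$ in $\ug$ for $u \in \uh$ (it follows from $x_{\alpha}h = (h-\alpha(h))x_{\alpha}$ by multiplicativity in the degree of $u$; this is the computation underlying Lemma~\ref{rootact}), together with the observation $\overline{\lambda+\alpha}\circ\sigma_{\alpha} = \overline{\lambda}$ — both sides are algebra homomorphisms $\uh \to \bb{C}$ and they agree on $\mathfrak{h}$. These two facts give $x_{\alpha}(ker(\overline{\lambda})M) \subseteq ker(\overline{\lambda+\alpha})M$, so (\ref{actiondef}) descends to the quotients. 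I expect this to be the only step that is not purely formal.

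For (b): the relations within $\mathfrak{h}$ and the relations $[h,x_{\alpha}] = \alpha(h)x_{\alpha}$ are immediate from the weight grading (on $\ca{W}(M)_{\lambda+\alpha}$ the element $h$ acts by $(\lambda+\alpha)(h)$, versus $\lambda(h)$ on $\ca{W}(M)_{\lambda}$). For two root vectors a direct computation gives, for $\overline{v} \in \ca{W}(M)_{\lambda}$,
\[ x_{\alpha}\cdot(x_{\beta}\cdot\overline{v}) - x_{\beta}\cdot(x_{\alpha}\cdot\overline{v}) = [x_{\alpha},x_{\beta}]v + ker(\overline{\lambda+\alpha+\beta})M; \]
when $\alpha+\beta \in \Delta$ the right-hand side equals $[x_{\alpha},x_{\beta}]\cdot\overline{v}$ by (\ref{actiondef}), when $\alpha+\beta$ is neither a root nor $0$ both sides vanish, and when $\beta = -\alpha$ one uses $h_{\alpha}:=[x_{\alpha},x_{-\alpha}]\in\mathfrak{h}$ and $h_{\alpha}-\lambda(h_{\alpha})\in ker(\overline{\lambda})$ to identify the right-hand side with $\lambda(h_{\alpha})\overline{v} = h_{\alpha}\cdot\overline{v}$. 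This establishes the $\mathfrak{g}$-module structure.

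For functoriality: a $\mathfrak{g}$-homomorphism $f:M\to N$ is in particular $\uh$-linear, so $f(\mathfrak{m}M)\subseteq\mathfrak{m}N$ for every $\mathfrak{m}\in Max(\uh)$; hence $\ca{W}(f)$ is a well-defined linear map sending $\ca{W}(M)_{\lambda}$ to $\ca{W}(N)_{\lambda}$. It commutes with the action of root vectors by (\ref{actiondef}) and $f(x_{\alpha}v) = x_{\alpha}f(v)$, and with the $\mathfrak{h}$-action trivially, so $\ca{W}(f)$ is a $\mathfrak{g}$-homomorphism; and $\ca{W}(\mathrm{id}_{M}) = \mathrm{id}_{\ca{W}(M)}$ and $\ca{W}(g\circ f) = \ca{W}(g)\circ\ca{W}(f)$ are clear from the defining formula on morphisms. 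Hence $\ca{W}$ is a functor $\ug\text{-Mod}\to\ug\text{-Mod}$.
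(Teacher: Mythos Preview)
Your proof is correct and follows essentially the same approach as the paper's: verify the bracket relations directly on cosets and note that composition and identity are preserved. You are in fact more thorough than the paper, which omits the well-definedness check for (\ref{actiondef}) that you supply via $x_{\alpha}u = \sigma_{\alpha}(u)x_{\alpha}$ and $\overline{\lambda+\alpha}\circ\sigma_{\alpha} = \overline{\lambda}$; the paper also handles the three subcases for $[x_{\alpha},x_{\beta}]$ uniformly (treating $\mathfrak{h}$ as the $\beta=0$ case), whereas you split them out explicitly, but this is a cosmetic difference.
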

\begin{proof}
For any root vectors $x_{\alpha}$ and $x_{\beta}$ we have
\begin{align*}
x_{\alpha} \cdot x_{\beta} &\cdot (v + ker(\overline{\lambda})M) - x_{\beta} \cdot x_{\alpha} \cdot (v + ker(\overline{\lambda})M)\\
&=x_{\alpha} \cdot x_{\beta}\cdot v + ker(\overline{\lambda+\beta+\alpha})M - x_{\beta} \cdot x_{\alpha}\cdot v + ker(\overline{\lambda+\alpha+\beta})M\\
&=[x_{\alpha},x_{\beta}]\cdot v + ker(\overline{\lambda+\alpha+\beta})M\\
 &=[x_{\alpha},x_{\beta}]\cdot (v + ker(\overline{\lambda})M).
\end{align*}
Checking the same relation for a root vectors and a Cartan element corresponds to taking $\beta= 0$ above. Thus $\ca{W}(M)$ is a $\mathfrak{g}$-module.
On morphisms, $\ca{W}$ clearly preserves composition and identity, so the functoriality claim follows.
\end{proof}

\begin{lemma}
\label{weightinglemma}
The following holds:
\begin{enumerate}[$($i$)$]
 \item\label{weight1}  For any $M$ in $\ug$-Mod, $\ca{W}(M)$ is a weight module.
 \item\label{weight2}  If $M$ is a weight module, then  $\ca{W}(M) \simeq M$.
 \item\label{weight3} $\ca{W} \circ \ca{W} \simeq \ca{W}$.
 \item\label{weight4} Suppose $M$ admits a central character: there exists a homomorphism 
$\chi_{M}: Z(\mathfrak{g}) \rightarrow \bb{C}$, such that for all $v \in M$ and $z \in Z(\mathfrak{g})$ we have $z \cdot v = \chi_{M}(z)v$.
  Then $\ca{W}(M)$ also admits a central character, and $\chi_{M}=\chi_{\ca{W}(M)}$.
 \item\label{weight5} Suppose $M$ admits a generalized central character: there is a central character $\chi_{M}$ such that for all $v \in M$ there is a $k\in \bb{N}$ such that $(z-\chi_{M})^{k}v=0$.
  Then $\ca{W}(M)$ also admits a generalized central character, and $\chi_{M}=\chi_{\ca{W}(M)}$.
\end{enumerate}
\end{lemma}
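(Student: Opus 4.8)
The plan is to prove the five parts essentially in order, since each is a short verification from the definition of $\ca{W}$ and the action formula~\eqref{actiondef}, and parts (ii)–(v) reuse the picture set up in (i). For (i), I would observe directly that each summand $M/\ker(\ol{\lambda})M$ is, by construction, killed by $\ker(\ol{\lambda})$, hence is a (possibly zero) weight space of weight $\lambda$; the action~\eqref{actiondef} sends the $\lambda$-summand into the $(\lambda+\alpha)$-summand for a root vector $x_\alpha$, and into itself for $h\in\fr{h}$, so $\ca{W}(M)=\bigoplus_\lambda \ca{W}(M)_\lambda$ with $\ca{W}(M)_\lambda$ contained in $M/\ker(\ol\lambda)M$. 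Thus $\ca{W}(M)$ is a weight module, with the caveat that some summands may vanish.

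For (ii), if $M=\bigoplus_\mu M_\mu$ is already a weight module, I would check that $\ker(\ol\lambda)M = \bigoplus_{\mu\neq\lambda} M_\mu$: indeed on $M_\mu$ the element $h-\mu(h)$ acts invertibly when $\mu\neq\lambda$ (pick $h$ with $\mu(h)\neq\lambda(h)$) and as $0$ on $M_\lambda$, so $M/\ker(\ol\lambda)M\cong M_\lambda$ naturally. Assembling these isomorphisms over all $\lambda$ gives a linear bijection $\ca{W}(M)\to M$, and comparing~\eqref{actiondef} with the original action shows it is $\ug$-equivariant. Part (iii) is then immediate: $\ca{W}(M)$ is a weight module by (i), so (ii) applied to $\ca{W}(M)$ gives $\ca{W}(\ca{W}(M))\simeq\ca{W}(M)$, and functoriality of $\ca{W}$ makes this natural.

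For (iv), let $z\in Z(\fr{g})$. Since $z$ commutes with $\fr h$, multiplication by $z$ preserves $\ker(\ol\lambda)M$, hence descends to each summand $M/\ker(\ol\lambda)M$; on that quotient it acts as the scalar $\chi_M(z)$ because it already does so on $M$. Therefore $z$ acts on all of $\ca{W}(M)$ by $\chi_M(z)$, giving $\chi_{\ca{W}(M)}=\chi_M$. Part (v) is the same argument with $(z-\chi_M(z))$ in place of $z$: this operator preserves $\ker(\ol\lambda)M$ and some power of it kills any given $v\in M$, hence kills the class of $v$ in the quotient; one must note that $\ca{W}(M)$ is spanned by such classes, so every element of $\ca{W}(M)$ is annihilated by some power of $z-\chi_M(z)$, establishing the generalized central character. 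The only genuinely delicate point across the whole lemma is making the identification $M/\ker(\ol\lambda)M\cong M_\lambda$ in (ii) precise and checking equivariance carefully; everything else is a routine diagram chase, so that is where I would spend the bulk of the write-up.
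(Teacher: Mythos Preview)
Your proposal is correct and follows essentially the same approach as the paper's proof: identify each summand as the $\lambda$-weight space, compute $\ker(\ol\lambda)M$ explicitly when $M$ is already a weight module, deduce (iii) from (i)--(ii), and use that $Z(\fr g)$ commutes with $\fr h$ for (iv)--(v). The only slip is in your argument for (ii): you wrote that ``$h-\mu(h)$ acts invertibly on $M_\mu$ when $\mu\neq\lambda$ and as $0$ on $M_\lambda$,'' but $h-\mu(h)$ always kills $M_\mu$; you mean $h-\lambda(h)$, which acts on $M_\mu$ by the scalar $(\mu-\lambda)(h)$---exactly as the paper writes it.
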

\begin{proof}
 On $M / \ker(\overline{\lambda})M$, each element of the form $h-\lambda(h)$ acts as zero, so the direct sum decomposition in the definition of $\ca{W}(M)$ is really the weight space decomposition. Thus \eqref{weight1} holds.
 To prove \eqref{weight2}, let $v \in M_{\mu}$ be a weight vector of $M$. Then for all $h \in \mathfrak{h}$, we have
 $(h-\lambda(h))v=(\mu-\lambda)(h)v$, so if $\mu \neq \lambda$, this is nonzero for some $h$ which means that $v \in ker(\olambda)M$ and $v=0$ in the quotient $M / ker(\ol{\lambda})M$. On the other hand,
 if $\lambda = \mu$, then $ker (\olambda)v=0$ which shows that $M_{\mu} \simeq \mathcal{W}(M)_{\mu}$ as $\uh$-modules. Thus $M \simeq \mathcal{W}(M)$ as $\ug$-modules since the action of root vectors coincide.
Statement \eqref{weight3} follows from \eqref{weight1} and \eqref{weight2}. Finally, suppose $M$ admits central character: there exists a homomorphism
 $\chi: Z(\mathfrak{g}) \rightarrow \bb{C}$ such that $zm=\chi(z)m$ for all $z \in Z(\mathfrak{g})$ and $m \in M$. Then, since $Z(\mathfrak{g}) \subset \ug_{0}$,
 by definition we have $z \cdot (m + \mathfrak{m}M)=zm + \mathfrak{m}M = \chi(z)(m + \mathfrak{m}M)$ for all $\mathfrak{m} \in Max(\uh)$. Thus the central character of $\ca{W}(M)$ is also $\chi$, and \eqref{weight4} holds.
 Similarly, for each $k \in \bb{N}$, we have $(z-\chi(z))^{k} \in \ug_{0}$ so the same argument handles the proof of \eqref{weight5}.
\end{proof}

The classification in the next section relies on the following proposition.
\begin{prop}
\label{freegivescf}
 Let $M$ be a $\uh$-free module of rank $d$, that is,  $Res_{\uh}^{\ug} M \simeq \mathfrak{\uh}^{\oplus d}$. Then $\ca{W}(M)$ is a coherent family of degree $d$. 
\end{prop}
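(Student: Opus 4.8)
\emph{Proof proposal.} The plan is to verify the two defining conditions of Definition~\ref{cohdef} directly, using the identification $Res_{\uh}^{\ug} M \simeq \uh^{\oplus d}$ together with the explicit action \eqref{actiondef}. First I would pin down the weight-space dimensions. By Lemma~\ref{weightinglemma}\eqref{weight1} we already know that $\ca{W}(M)$ is a weight module with $\ca{W}(M)_{\lambda} = M/\ker(\olambda)M$. Fixing a $\uh$-module isomorphism $M \simeq \uh^{\oplus d}$ and recalling that $\ker(\olambda)$ is a maximal ideal of $\uh$ with residue field $\bbc$, we obtain
\[
\ca{W}(M)_{\lambda} = M/\ker(\olambda)M \simeq \bigl(\uh/\ker(\olambda)\bigr)^{\oplus d} \simeq \bbc^{\oplus d},
\]
so $\dim \ca{W}(M)_{\lambda} = d$ for every $\lambda \in \hstar$, which is the first condition.

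The key structural observation for the second condition is that the action of $\ug_{0}$ on $M$ is $\uh$-linear. Indeed, $\mathfrak{h}$ is abelian, so $\uh \subseteq \ug_{0}$, and since $\ug_{0}$ is by definition the commutant of $\mathfrak{h}$ in $\ug$, every $u \in \ug_{0}$ commutes with every element of $\uh$; hence $u$ acts on $M$ as a $\uh$-module endomorphism, and in particular preserves $\ker(\olambda)M$ and descends to $\ca{W}(M)_{\lambda} = M/\ker(\olambda)M$ (consistently with \eqref{actiondef} applied with total weight $\alpha = 0$). Under the identification $M \simeq \uh^{\oplus d}$, and using that $\uh$ is commutative so that $\mathrm{End}_{\uh}(\uh^{\oplus d}) \simeq \mathrm{Mat}_{d}(\uh)$, each $u \in \ug_{0}$ is represented by a matrix $A_{u} \in \mathrm{Mat}_{d}(\uh)$. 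Reducing modulo $\ker(\olambda)$, the induced operator of $u$ on $\ca{W}(M)_{\lambda} \simeq \bbc^{\oplus d}$ is the numerical matrix $\olambda(A_{u})$ obtained by applying $\olambda$ to each entry of $A_{u}$, so that
\[
Tr\; u|_{\ca{W}(M)_{\lambda}} = Tr\,\olambda(A_{u}) = \olambda\bigl(Tr\, A_{u}\bigr).
\]
Setting $f_{u} := Tr\, A_{u} \in \uh$ then gives exactly the reformulation of the second condition of Definition~\ref{cohdef} noted immediately after it, and we conclude that $\ca{W}(M)$ is a coherent family of degree $d$.

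I do not anticipate a serious obstacle; the only point that needs care is the identification of $\mathrm{End}_{\uh}(M)$ with $d\times d$ matrices over $\uh$ and the fact that reduction modulo the maximal ideal $\ker(\olambda)$ corresponds to entrywise evaluation — and, upstream of that, the observation that $\ug_{0}$, which contains $\uh$, acts $\uh$-linearly on $M$. Once these are in place the polynomiality of the trace is immediate, since it is literally the evaluation $\lambda \mapsto \olambda(Tr\, A_{u})$ of a fixed element of $\uh$.
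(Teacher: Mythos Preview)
Your proposal is correct and follows essentially the same approach as the paper's own proof: verify that each weight space has dimension $d$ via the identification $M/\ker(\olambda)M \simeq (\uh/\ker(\olambda))^{\oplus d}$, observe that any $u\in\ug_{0}$ acts $\uh$-linearly on $M$ and is therefore represented by a matrix in $\mathrm{Mat}_{d}(\uh)$, and then reduce modulo $\ker(\olambda)$ to read off the trace as $\olambda$ applied to the matrix trace. The paper's argument is identical in substance, differing only in notation (it writes the matrix entries as $f_{i,j}^{(u)}$ and the trace as $\sum_i f_{i,i}^{(u)}$).
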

\begin{proof}
By Lemma~\ref{weightinglemma}, $\ca{W}(M)$ is a weight module and $\ca{W}(M)_{\lambda} = M / ker(\overline{\lambda})M$. The (classes of the) $d$ generators of $M$ is a basis in this space, and so
 $\dim \ca{W}(M)_{\lambda} = d$ for all $\lambda \in \mathfrak{h}^{*}$. Now let $u  \in \ug_{0}$. We shall show that there exists $f_{u} \in \uh$ such that $Tr \; u|_{\ca{W}(M)_{\lambda}} = \overline{\lambda}(f_{u})$.
 Since $u$ commutes with $\uh$ we have an endomorphism of $M$ defined by $m \mapsto um$. However, we have usual isomorphisms
\[End_{\mathcal{U}(\mathfrak{h})}(M) \simeq End_{\mathcal{U}(\mathfrak{h})}(\mathcal{U}(\mathfrak{h})^{\oplus d}) \simeq Mat_{n\times n}(\mathcal{U}(\mathfrak{h}))^{op},\] so we fix such isomorphisms and
 we write $[u]=(f_{i,j}^{(u)})$ for the matrix corresponding to the endomorphism given by the multiplication by $u$.
But then the action of $u$ on $M / ker(\overline{\lambda})M$ is given by the matrix $(\overline{\lambda}(f_{i,j}^{(u)}))$ (with respect to the basis of $M / ker(\overline{\lambda})M$ given by the generators of $M$).
Thus $Tr \; u|_{\ca{W}(M)_{\lambda}} = \overline{\lambda}(\sum_{i=1}^{d} f_{i,i}^{(u)})$, which shows that the trace of $u$ on $\ca{W}(M)_{\lambda}$ is polynomial in $\lambda$.
\end{proof}
\begin{cor}
 $\uh$-free modules of finite rank exist only in type $A$ and $C$.
\end{cor}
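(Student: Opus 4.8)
The plan is to deduce the statement immediately from the two facts already assembled above, namely Proposition~\ref{freegivescf} together with part~(i) of Proposition~\ref{cohprop}. The point is that the weighting functor $\ca{W}$ converts a $\uh$-free module of rank $d$ into a coherent family of degree $d$, and coherent families are known (via Mathieu's work) to exist only in types $A$ and $C$; chaining these two statements transfers the same restriction back to the source category $\mathfrak{M}$ and its higher-rank analogues.

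In more detail, I would argue as follows. Suppose $\mathfrak{g}$ is a finite-dimensional simple complex Lie algebra admitting a $\uh$-free module $M$ of some finite rank $d$, so that $Res^{\ug}_{\uh} M \simeq \uh^{\oplus d}$ with $d \geq 1$. By Proposition~\ref{freegivescf}, $\ca{W}(M)$ is a coherent family of degree $d$. Since $d \geq 1$, each weight space $\ca{W}(M)_{\lambda} = M/\ker(\ol{\lambda})M$ is $d$-dimensional, hence nonzero, so $\ca{W}(M)$ is a nonzero coherent family. By Proposition~\ref{cohprop}(i), a nonzero coherent family can only occur when $\mathfrak{g}$ is of type $A$ or type $C$. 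Therefore $\mathfrak{g}$ is of type $A$ or $C$, which is precisely the assertion; in particular the category $\mathfrak{M}$, as well as its rank-$d$ version for any $d$, is empty in every other type.

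I do not expect any real obstacle in this last step: all the substantive work lies upstream, in the construction of $\ca{W}$ and the verification that it sends $\uh$-free modules to coherent families (Proposition~\ref{freegivescf}), and ultimately in Mathieu's classification underlying Proposition~\ref{cohprop}(i). The only thing that needs care is a bookkeeping convention, namely that ``rank $d$'' is read with $d \geq 1$, so that $\ca{W}(M)$ is genuinely a nonzero weight module; with that understood the corollary follows at once, and no separate case analysis over the Dynkin types $B$, $D$, $E$, $F$, $G$ is required beyond what is already encapsulated in Proposition~\ref{cohprop}.
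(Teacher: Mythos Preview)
Your proposal is correct and matches the paper's own proof, which simply says the corollary follows directly from Proposition~\ref{cohprop} and Proposition~\ref{freegivescf}. The extra bookkeeping remark about $d\geq 1$ is harmless and implicit in the paper's argument.
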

\begin{proof}
 This follows directly from Proposition~\ref{cohprop} and Proposition~\ref{freegivescf}.
\end{proof}
In the next section we shall go in the opposite direction in type $C$: given a coherent family $\ca{M}$ of degree $1$ we shall obtain all $\uh$-free modules $M$ such that $\ca{W}(M)^{ss} \simeq \ca{M}^{ss}$. 

\section{Classification of $\uh$-free modules of rank $1$ in type $C_{n}$}
\label{sec5}
From here on, we fix $\mathfrak{g}:=\mathfrak{sp}(2n)$; the complex symplectic Lie algebra of rank $n$.
\subsection{A basis of $\mathfrak{sp}(2n)$}
The Lie algebra $\mathfrak{sp}(2n)$ of type $C_{n}$ is the Lie subalgebra of $\mathfrak{gl}_{2n}$ consisting of all $2n\times 2n$-matrices $A$ satisfying $SA=-A^{T}S$ where 
\begin{displaymath}
 S=\left( \begin{array}{cc}
  0 & I_{n}\\
  -I_{n} & 0\\
 \end{array} \right).
\end{displaymath}
Equivalently, $\mathfrak{sp}(2n)$ consists of all $2n\times 2n$-matrices with block form
\begin{displaymath}
 \left( \begin{array}{cc}
  E_{11} & E_{12}\\
  E_{21} & E_{22}\\
 \end{array} \right)
\end{displaymath}
such that $E_{12} = E_{12}^{T}$,  $E_{21} = E_{21}^{T}$ and $E_{22} = -E_{11}^{T}$. We fix the Cartan subalgebra as the subalgebra of $\mathfrak{sp}(2n)$ consisting of diagonal matrices.
We fix the following basis for the Cartan subalgebra: $\{h_{i}:=e_{i,i} - e_{n+i,n+i} | 1 \leq i \leq n\}$.
Let $\{\epsilon_{i}\}$ be the basis of $\mathfrak{h}^{*}$ dual to $\{h_{i}\}$, that is, $\epsilon_{i}(h_{k})=\delta_{i,k}$. The Killing form on $\hstar$ is given by $(\epsilon_{i},\epsilon_{j})=\delta_{i,j}$.
Using this notation, the root system of $\mathfrak{sp}(2n)$ is precisely \[\Delta= \{\pm \epsilon_{i} \pm \epsilon_{j} | 1 \leq i,j \leq n\} \setminus \{0\}.\]

We fix root vectors in $\mathfrak{sp}(2n)$ as follows. The indices $i,j$ are distinct integers between $1$ and $n$.
\begin{displaymath}
\begin{array}{r c l | c}
\multicolumn{3}{c}{\text{Root vector}} &\text{Root} \\
\hline
X_{2\epsilon_{i}}              &:=& 2e_{i,n+i}                 & 2\epsilon_{i}\\
X_{-2\epsilon_{i}}             &:=& -2e_{n+i,i}                 & -2\epsilon_{i}\\
X_{\epsilon_{i}+\epsilon_{j}}  &:=& e_{i,n+j} + e_{j,n+i}      & \epsilon_{i}+\epsilon_{j}\\
X_{-\epsilon_{i}-\epsilon_{j}} &:=& -e_{n+i,j} - e_{n+j,i}      & -\epsilon_{i}-\epsilon_{j}\\
X_{\epsilon_{i}-\epsilon_{j}}  &:=& e_{i,j} - e_{n+j,n+i}      & \epsilon_{i}-\epsilon_{j}\\ 
\end{array}
\end{displaymath}
We have now fixed a basis of $\mathfrak{sp}(2n)$ of the form $B:=\{X_{\alpha} | \alpha \in \Delta\} \cup \{h_{i} | 1 \leq i \leq n\}$.

\subsection{A rank one $\uh$-free module for $\mathfrak{sp}(2n)$}
\label{sec3dot2}
As associative algebras, $\uh \simeq \bb{C}[h_{1}, \ldots, h_{n}]$, so we will define a $\uh$-free module of rank $1$ by extending the canonical action of $\uh$ on $\uh$ to $\ug$.

\begin{thm}
\label{mnolldef}
Let $\sigma_{i}$ be the algebra automorphism of $\bb{C}[h_{1}, \ldots, h_{n}]$ determined by $h_{k} \mapsto h_{k}-\delta_{i,k}$.
The following table provides a $\ug$-module structure on $\bb{C}[h_{1}, \ldots, h_{n}]$.
\begin{displaymath}
\begin{array}{|rcl|}
\hline
h_{i} \cdot f                 &=&   h_{i}f \\
X_{2\epsilon_{i}} \cdot f             &=&   (h_{i}-\frac{1}{2})(h_{i}-\frac{3}{2})\sigma_{i}^{2}(f) \\
X_{-2\epsilon_{i}} \cdot f             &=&   \sigma_{i}^{-2}(f)   \\
X_{\epsilon_{i}+\epsilon_{j}}\cdot f  &=&   (h_{i}-\frac{1}{2})(h_{j}-\frac{1}{2})\sigma_{i}\sigma_{j}(f)\\
X_{-\epsilon_{i}-\epsilon_{j}}\cdot f  &=&   \sigma_{i}^{-1}\sigma_{j}^{-1}(f)      \\
X_{\epsilon_{i}-\epsilon_{j}}\cdot f  &=&   (h_{i}-\frac{1}{2})\sigma_{i}\sigma_{j}^{-1}(f)\\ 
\hline
\end{array}
\end{displaymath}
We denote this $\ug$-module by $M_{0}$.
\end{thm}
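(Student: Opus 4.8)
The plan is to treat the table as the definition of a linear map $\rho$ from the basis $B=\{X_\alpha\}\cup\{h_i\}$ of $\mathfrak{sp}(2n)$ into $\mathrm{End}_{\bb{C}}(\bb{C}[h_1,\dots,h_n])$, and to verify that $\rho$ respects every bracket $[X,Y]=XY-YX$ among elements of $B$ — this is precisely the assertion that $\bb{C}[h_1,\dots,h_n]$ becomes a $\ug$-module. Exactly as in Lemma~\ref{rootact}, the table forces each root vector to act by $X_\alpha\cdot f=(X_\alpha\cdot1)\,\sigma_\alpha(f)$, where $\sigma_{2\epsilon_i}=\sigma_i^{2}$, $\sigma_{\epsilon_i\pm\epsilon_j}=\sigma_i\sigma_j^{\pm1}$, etc., so that $\sigma_\alpha$ is the automorphism with $\sigma_\alpha(h)=h-\alpha(h)$ and $\sigma_\alpha\sigma_\beta=\sigma_{\alpha+\beta}$. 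The relations involving only $\mathfrak h$ are immediate: $[h_i,h_j]\mapsto 0$ because $\bb{C}[h_1,\dots,h_n]$ is commutative, and $[h_i,X_\alpha]=\alpha(h_i)X_\alpha$ because
\begin{align*}
h_i\cdot(X_\alpha\cdot f)-X_\alpha\cdot(h_i\cdot f)
&=h_i\,(X_\alpha\cdot1)\,\sigma_\alpha(f)-(X_\alpha\cdot1)\,\sigma_\alpha(h_if)\\
&=(X_\alpha\cdot1)\bigl(h_i-\sigma_\alpha(h_i)\bigr)\sigma_\alpha(f)=\alpha(h_i)\,(X_\alpha\cdot f).
\end{align*}

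The remaining relations are those between two root vectors $X_\alpha,X_\beta$, and here the key observation is that both sides have the same shape as operators. Using $\sigma_\alpha\sigma_\beta=\sigma_{\alpha+\beta}$ one gets, for every $f$,
\[
X_\alpha\cdot(X_\beta\cdot f)-X_\beta\cdot(X_\alpha\cdot f)=\Bigl[(X_\alpha\cdot1)\,\sigma_\alpha(X_\beta\cdot1)-(X_\beta\cdot1)\,\sigma_\beta(X_\alpha\cdot1)\Bigr]\,\sigma_{\alpha+\beta}(f),
\]
while $[X_\alpha,X_\beta]$ is a scalar multiple of $X_{\alpha+\beta}$ when $\alpha+\beta\in\Delta$, lies in $\mathfrak h$ when $\alpha+\beta=0$, and vanishes otherwise; in all three cases $[X_\alpha,X_\beta]\cdot f=g_{\alpha,\beta}\,\sigma_{\alpha+\beta}(f)$ (with $\sigma_0:=\mathrm{id}$) for a polynomial $g_{\alpha,\beta}$ read off the table. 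Hence the relation for the pair $(\alpha,\beta)$ holds for all $f$ as soon as it holds for $f=1$, i.e. it reduces to the single polynomial identity
\[
(X_\alpha\cdot1)\,\sigma_\alpha(X_\beta\cdot1)-(X_\beta\cdot1)\,\sigma_\beta(X_\alpha\cdot1)=[X_\alpha,X_\beta]\cdot1,
\]
where $[X_\alpha,X_\beta]$ is now an honest matrix commutator inside $\mathfrak{sp}(2n)\subset\mathfrak{gl}_{2n}$.

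Finally I would carry out this finite check. Invariance under permutations of the indices $\{1,\dots,n\}$ cuts it down to a short list of representative pairs, grouped by the types of the two roots ($\pm2\epsilon_i$ versus $\pm\epsilon_i\pm\epsilon_j$) and by the coincidence and sign patterns of their indices — typical cases being $(2\epsilon_i,-2\epsilon_i)$, $(\epsilon_i-\epsilon_j,\epsilon_j-\epsilon_i)$, $(\epsilon_i-\epsilon_j,-\epsilon_i-\epsilon_j)$, $(\epsilon_i-\epsilon_j,\epsilon_j-\epsilon_k)$, $(2\epsilon_i,\epsilon_j-\epsilon_i)$, together with the pairs for which $\alpha+\beta$ is not a root (where the right-hand side must come out $0$). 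For each one the left side is evaluated by tracking how $\sigma_i$ shifts the half-integer factors $h_k-\tfrac12$, $h_k-\tfrac32$, and the right side by the matrix commutator, using the normalizations $X_{2\epsilon_i}=2e_{i,n+i}$, $X_{-2\epsilon_i}=-2e_{n+i,i}$ and the rest of the table; the factors of $2$ on the long-root vectors and the shifts by $\tfrac12,\tfrac32$ are precisely what make the two sides match. I expect the only real difficulty to be organizational — keeping the case list exhaustive and pinning down the structure constants $N_{\alpha,\beta}$ with the chosen normalization — rather than conceptual, once the reduction to the identity above is in place.
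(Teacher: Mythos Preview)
Your proposal is correct and follows essentially the same approach as the paper: reduce to checking the bracket relation on pairs of basis vectors, dispose of the $\mathfrak{h}$--$\mathfrak{h}$ and $\mathfrak{h}$--root cases first, and then verify the root--root cases by direct computation organized by root types and index coincidences. Your explicit reduction of the root--root case to the polynomial identity at $f=1$ is a small organizational sharpening of what the paper does implicitly (its sample computation for $(-\epsilon_i-\epsilon_j,\epsilon_k-\epsilon_l)$ carries the general $f$ along but is really just computing the coefficient in front of $\sigma_{\alpha+\beta}(f)$).
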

\begin{proof}
To show that $M_{0}$ is a $\ug$-module, it suffices to check that
\begin{equation}
 \label{eq1}
    [X, Y] \cdot f = X\cdot Y\cdot f - Y \cdot X \cdot f,
\end{equation}
for all $X,Y \in B$.

Since \eqref{eq1} obviously holds for $X,Y \in \mathfrak{h}$, we let $\alpha \in \Delta$ and $h \in \mathfrak{h}$ and we compute 
\begin{align*}
 h \cdot X_{\alpha} \cdot f -  X_{\alpha} \cdot  h \cdot f &= h(X_{\alpha} \cdot 1) \sigma_{\alpha}(f) - (X_{\alpha} \cdot 1) \sigma_{\alpha}(h)\sigma_{\alpha}(f)\\
 &= (h - \sigma_{\alpha}(h))(X_{\alpha} \cdot 1) \sigma_{\alpha}(f)\\
 &= \alpha(h)(X_{\alpha} \cdot 1) \sigma_{\alpha}(f)\\
 &= \alpha(h)X_{\alpha} \cdot f\\
 &= [h,X_{\alpha}] \cdot f.
\end{align*}

Thus it only remains to check that 
\begin{equation}
 \label{eq2}
    [X_{\alpha}, X_{\beta}] \cdot f = X_{\alpha}\cdot X_{\beta}\cdot f - X_{\beta} \cdot X_{\alpha} \cdot f,
\end{equation}
for all $\alpha,\beta \in \Delta$.
Since $\sigma_{i}$ fixes all $h_{j}$ for $i \neq j$, it is clear that \eqref{eq2} also holds trivially for the following (unordered) pairs of roots $(\alpha,\beta)$:
\begin{align*}
&(2\epsilon_{i},2\epsilon_{j}),(2\epsilon_{i},-2\epsilon_{j}), (2\epsilon_{i},\epsilon_{j}+\epsilon_{k}),(2\epsilon_{i},-\epsilon_{j}-\epsilon_{k}), (2\epsilon_{i},\epsilon_{j}-\epsilon_{k}),\\
&(-2\epsilon_{i},-2\epsilon_{j}),(-2\epsilon_{i},\epsilon_{j}+\epsilon_{k}),(-2\epsilon_{i},-\epsilon_{j}-\epsilon_{k}), (-2\epsilon_{i},\epsilon_{j}-\epsilon_{k}),\\
&(\ep_{i}+\ep_{j},\ep_{k}+\ep_{l}),(\ep_{i}+\ep_{j},-\ep_{k}-\ep_{l}),(\ep_{i}+\ep_{j},\ep_{k}-\ep_{l}),\\
&(-\ep_{i}-\ep_{j},-\ep_{k}-\ep_{l}),(-\ep_{i}-\ep_{j},\ep_{k}-\ep_{l}),(\ep_{i}-\ep_{j},\ep_{k}-\ep_{l}),\\
\end{align*}
where $i,j,k,l$ are pairwise distinct. Note that in these cases, the left side of \eqref{eq2} is zero since $\alpha+\beta$ is not a root.

The remaining cases need a short verification. We show the computation for the pair $(-\ep_{i}-\ep_{j},\ep_{k}-\ep_{l})$ where $i \neq j$ and $k \neq l$. We have
\begin{align*}
 X_{-\ep_{i}-\ep_{j}} &\cdot X_{\ep_{k}-\ep_{l}} \cdot f -  X_{\ep_{k}-\ep_{l}} \cdot X_{-\ep_{i}-\ep_{j}} \cdot f \\
&=  \big( \sigma_{i}^{-1}\sigma_{j}^{-1}(h_{k}-\frac{1}{2}) - (h_{k}-\frac{1}{2})\big) \sigma_{i}^{-1}\sigma_{j}^{-1}\sigma_{k}\sigma_{l}^{-1} f\\
&=  \big((h_{k}-\frac{1}{2} + \delta_{k,i} + \delta_{k,j}) - (h_{k}-\frac{1}{2})\big) \sigma_{i}^{-1}\sigma_{j}^{-1}\sigma_{k}\sigma_{l}^{-1} f\\
&= (\delta_{k,i} + \delta_{k,j}) \sigma_{i}^{-1}\sigma_{j}^{-1}\sigma_{k}\sigma_{l}^{-1} f\\
&= \delta_{k,i}\sigma_{j}^{-1}\sigma_{l}^{-1} f + \delta_{k,j} \sigma_{i}^{-1}\sigma_{l}^{-1} f\\
&= (\delta_{k,i} X_{-\ep_{j}-\ep_{l}} + \delta_{k,j}X_{-\ep_{i}-\ep_{l}} ) \cdot f\\
&=  [X_{-\ep_{i}-\ep_{j}}, X_{\ep_{k}-\ep_{l}}] \cdot f.
\end{align*}

We omit verification of the remaining relations, the computation is similar.
\end{proof}
\begin{prop}
\label{simpleprop}
 The module $M_{0}$ is a simple $\mathfrak{g}$-module.
\end{prop}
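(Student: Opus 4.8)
The plan is to show that any nonzero submodule $N \subseteq M_0 = \bb{C}[h_1, \ldots, h_n]$ must be all of $M_0$. Since $M_0$ is free of rank one over $\uh = \bb{C}[h_1,\ldots,h_n]$ and the Cartan acts by multiplication, any submodule $N$ is in particular a $\uh$-submodule, hence an ideal of $\bb{C}[h_1,\ldots,h_n]$. The key observation is that the negative root vectors act by pure automorphisms (no polynomial coefficient): $X_{-2\epsilon_i} \cdot f = \sigma_i^{-2}(f)$, $X_{-\epsilon_i-\epsilon_j}\cdot f = \sigma_i^{-1}\sigma_j^{-1}(f)$, and $X_{\epsilon_i-\epsilon_j}$ acts by $(h_i - \tfrac12)\sigma_i\sigma_j^{-1}(\cdot)$. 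So the strategy is: first use these actions to show $N$ contains a nonzero constant, i.e. $1 \in N$, and then conclude $N = M_0$.

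First I would take a nonzero $f \in N$ of minimal total degree $d$. If $d = 0$ then $f$ is a nonzero constant and we are done immediately (since $\uh \cdot f = M_0$). If $d \geq 1$, the idea is to produce an element of $N$ of strictly smaller degree, contradicting minimality. The natural tool is a commutator-type operator that lowers degree. Consider for instance the operator $A_i := X_{2\epsilon_i} X_{-2\epsilon_i} - $ (something), or more directly compute how a composite like $X_{-2\epsilon_i}X_{2\epsilon_i}$ acts: $X_{-2\epsilon_i}\cdot X_{2\epsilon_i} \cdot f = \sigma_i^{-2}\big((h_i-\tfrac12)(h_i-\tfrac32)\sigma_i^2(f)\big) = (h_i - \tfrac12 + 2)(h_i - \tfrac32 + 2) f = (h_i+\tfrac32)(h_i+\tfrac12) f$, while $X_{2\epsilon_i}\cdot X_{-2\epsilon_i}\cdot f = (h_i-\tfrac12)(h_i-\tfrac32) f$. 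Subtracting gives an element of $N$ equal to $\big[(h_i+\tfrac32)(h_i+\tfrac12) - (h_i-\tfrac12)(h_i-\tfrac32)\big] f = (4h_i - \tfrac{(-2)(?)}{})f$; the bracket is a degree-one polynomial in $h_i$ with leading coefficient nonzero, in fact $= 4h_i$ (constant terms: $\tfrac34 - \tfrac34 = 0$; linear: $(\tfrac32+\tfrac12)h_i - (-\tfrac12-\tfrac32)h_i = 2h_i + 2h_i = 4h_i$), so $h_i f \in N$ — this raises degree rather than lowering it, so instead I should compare $X_{-2\epsilon_i}$ applied to translates. The cleaner approach: apply a negative root automorphism $\tau := \sigma_1^{-2}$ etc., which sends $f(h) \in N$ to $f(\text{shifted } h) \in N$; since $N$ is a $\uh$-submodule closed under these shifts, and also closed under multiplication by the $h_k$, one shows $N$ is invariant under all shifts $h_k \mapsto h_k + c$ for $c \in \bb{Z}$ and under multiplication by $\bb{C}[h]$. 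Then taking $f \in N$ nonzero and forming finite difference operators $f \mapsto \tau(f) - f$ reduces the degree, eventually producing a nonzero constant in $N$.

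So the concrete steps are: (1) observe $N$ is an ideal of $\bb{C}[h_1,\ldots,h_n]$ closed under the automorphisms $\sigma_i^{\pm 2}$, $\sigma_i^{\pm 1}\sigma_j^{\pm 1}$ arising from the root vector actions (for the actions with a polynomial prefactor like $X_{\epsilon_i - \epsilon_j}$, first multiply by a suitable element of $\uh$ to isolate the automorphism, or divide out — since $N$ is an ideal, $(h_i - \tfrac12)\sigma_i\sigma_j^{-1}(f) \in N$ combined with $X_{-\epsilon_i - \epsilon_j}$-type moves suffices); (2) deduce that for $f \in N$ nonzero of minimal degree $d \geq 1$, the finite difference $\sigma_i^{-2}(f) - f$ (or an appropriate combination over several $i$) lies in $N$ and has degree $< d$, unless $f$ is independent of all $h_i$, i.e. constant — contradiction; (3) hence $d = 0$, $N$ contains a nonzero scalar, so $N \supseteq \uh \cdot 1 = M_0$, giving $N = M_0$. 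I expect the main obstacle to be step (2): handling the mixed shifts carefully so that the difference operator genuinely drops the degree in \emph{every} variable that appears — one must check that if $f$ depends nontrivially on $h_i$ then $\sigma_i^{-2}(f) - f$ has strictly smaller $h_i$-degree (clear) and does not vanish, and organize an induction over the number of variables on which $f$ depends. A clean way to avoid case analysis is to argue by induction on $d = \deg f$: pick $f \in N$ of minimal degree, and if $d > 0$ pick a variable $h_i$ actually occurring in $f$; then $g := \sigma_i^{-2}(f) - f \in N$ is nonzero of degree $d - 1 < d$, contradiction. This forces $d = 0$ and completes the proof.
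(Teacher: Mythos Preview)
Your final argument is correct and is essentially the paper's own proof: the paper observes that the operator $(1 - X_{-2\epsilon_i})$ acts as $f \mapsto f - \sigma_i^{-2}(f)$, which strictly lowers the $h_i$-degree, so any nonzero element of a submodule can be reduced to the constant $1$. Your exploratory detours (the commutator computation, the discussion of mixed shifts and ideals) are unnecessary --- only the single finite-difference operator $X_{-2\epsilon_i} - 1$ is needed --- and your claim that $g$ has degree exactly $d-1$ should be weakened to $\deg g < d$, but the core argument matches the paper's.
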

\begin{proof}
It is clear that $Res_{\uh}^{\ug}(M_{0}) \simeq \uh$, so $M_{0} \in \mathfrak{M}$. To prove simplicity we note that
the element $(1-X_{-2\ep_{i}})$ of $\mathcal{U}(\mathfrak{sp}(2n))$ acts by decreasing the $h_{i}$-degree of a monomial in $\poln$ by $1$. Thus any nonzero element can be reduced to the generator $1$ of $\poln$, so there
 are no proper nontrivial submodules.
\end{proof}

\subsection{Structure of $\ca{W}(M_{0})$ and  $\ca{W}(M_{0})^{ss}$}
\label{sec3dot3}
For each $\lambda \in \mathfrak{h}^{*}$, let $v_{\lambda}:=1 + ker(\overline{\lambda})M_{0} \in \ca{W}(M_{0})$. Then $\{v_{\lambda} | \lambda \in \hstar\}$ is a basis for $\ca{W}(M_{0})$.

\begin{prop}
The action of $\mathfrak{sp}(2n)$ on weight vectors of the module $\ca{W}(M_{0})$ is given in the table below.
\begin{displaymath}
\begin{array}{|rcl|}
\hline
h_{i} \cdot v_{\lambda}                 &=&   \lambda(h_{i})v_{\lambda}\\
X_{2\epsilon_{i}}  \cdot v_{\lambda}             &=&   (\lambda(h_{i})+\frac{3}{2})(\lambda(h_{i})+\frac{1}{2})v_{\lambda + 2\epsilon_{i}} \\
X_{-2\epsilon_{i}}  \cdot v_{\lambda}             &=&   v_{\lambda - 2\epsilon_{i}}   \\
X_{\epsilon_{i}+\epsilon_{j}}\cdot v_{\lambda}  &=&   (\lambda(h_{i})+\frac{1}{2})(\lambda(h_{j})+\frac{1}{2})v_{\lambda + \epsilon_{i} + \epsilon_{j}}\\
X_{-\epsilon_{i}-\epsilon_{j}}\cdot v_{\lambda}  &=&   v_{\lambda - \epsilon_{i} - \epsilon_{j}}\\
X_{\epsilon_{i}-\epsilon_{j}}\cdot v_{\lambda}  &=&   (\lambda(h_{i})+\frac{1}{2})v_{\lambda + \epsilon_{i} - \epsilon_{j}}\\
\hline
\end{array}
\end{displaymath}
\end{prop}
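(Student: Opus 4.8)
The plan is to compute the action of each root vector on a weight vector $v_\lambda$ directly from the definitions, using the description of $\ca{W}(M_0)$ furnished by Proposition~\ref{freegivescf} and Lemma~\ref{weightinglemma}. Recall that $v_\lambda = 1 + \ker(\overline{\lambda})M_0$, and that by \eqref{actiondef} the action of a root vector $X_\alpha$ sends the $\lambda$-weight space to the $(\lambda+\alpha)$-weight space via $X_\alpha \cdot (w + \ker(\overline{\lambda})M_0) = (X_\alpha \cdot w) + \ker(\overline{\lambda+\alpha})M_0$. So the only real task is, for each $X_\alpha$ in our fixed basis, to take the formula for $X_\alpha \cdot 1$ from the table in Theorem~\ref{mnolldef}, reduce it modulo $\ker(\overline{\lambda+\alpha})M_0$, and read off the scalar multiple of $v_{\lambda+\alpha}$ that results.

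The key computational observation is how $\sigma_i$ interacts with the quotient. Working in $M_0 = \bb{C}[h_1,\ldots,h_n]$, an element $p(h_1,\ldots,h_n) \in \uh$ reduces modulo $\ker(\overline{\mu})M_0$ to the scalar $\overline{\mu}(p) = p(\mu(h_1),\ldots,\mu(h_n))$ times the class of $1$. Thus for a root vector with $X_\alpha \cdot 1 = p(h)\,\sigma^{(\alpha)}(1) = p(h)$ (since each $\sigma_i$ fixes $1$), we get $X_\alpha \cdot v_\lambda = p(h) + \ker(\overline{\lambda+\alpha})M_0 = \overline{\lambda+\alpha}(p)\, v_{\lambda+\alpha} = p\big((\lambda+\alpha)(h_1),\ldots\big)v_{\lambda+\alpha}$. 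For instance, with $\alpha = 2\epsilon_i$ and $p(h) = (h_i - \tfrac12)(h_i - \tfrac32)$, we evaluate at $(\lambda + 2\epsilon_i)(h_i) = \lambda(h_i) + 2$, giving $(\lambda(h_i) + \tfrac32)(\lambda(h_i)+\tfrac12)v_{\lambda+2\epsilon_i}$, exactly the table entry. The cases $X_{-2\epsilon_i}$, $X_{-\epsilon_i-\epsilon_j}$ have $p = 1$ and hence produce the bare weight vector, while $X_{\epsilon_i+\epsilon_j}$ and $X_{\epsilon_i-\epsilon_j}$ are handled the same way, evaluating the linear or bilinear polynomial at the shifted weight. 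The $h_i$ action is immediate since $h_i \cdot v_\lambda = h_i + \ker(\overline{\lambda})M_0 = \lambda(h_i)v_\lambda$.

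I do not anticipate a genuine obstacle here; the statement is essentially a bookkeeping corollary of the constructions already in place. The one point requiring a little care is making explicit why $v_{\lambda+\alpha}$ (rather than some other representative) is the natural basis element to compare against, i.e.\ that the classes $\{v_\lambda\}_{\lambda \in \hstar}$ really do form a basis of $\ca{W}(M_0)$ compatible with the weight-space decomposition $\ca{W}(M_0)_\lambda = M_0/\ker(\overline{\lambda})M_0 \cong \bb{C}$ — but this was already recorded just before the proposition statement. Hence the proof is just: fix $\lambda$, run through the six rows of the table in Theorem~\ref{mnolldef}, apply \eqref{actiondef}, and simplify using $\overline{\lambda+\alpha}\circ\sigma^{(\alpha)} = \overline{\lambda}$ together with $\sigma_i(1)=1$. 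I would present one representative case in full (say $X_{2\epsilon_i}$, since it is the one with a nontrivial quadratic coefficient and a nontrivial weight shift of $2$) and remark that the others are analogous.
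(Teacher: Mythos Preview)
Your proposal is correct and follows essentially the same approach as the paper: the paper's own proof is simply the one-line remark that the table follows from Theorem~\ref{mnolldef} together with~\eqref{actiondef}, and your argument spells out exactly that computation (applying~\eqref{actiondef} to $v_\lambda = 1 + \ker(\overline{\lambda})M_0$, then evaluating $X_\alpha\cdot 1$ modulo $\ker(\overline{\lambda+\alpha})M_0$). Your worked example for $X_{2\epsilon_i}$ is accurate, and the remaining rows are indeed analogous.
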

\begin{proof}
This follows from Theorem~\ref{mnolldef} together with \eqref{actiondef}. 
\end{proof}

By Lemma~\ref{cosetsub}, $\ca{W}(M_{0})$ is the direct sum of its submodules $\ca{W}(M_{0})[\mu]$. We shall now describe the structure of each such component.
For each nonzero $\alpha \in \hstar$ we have a corresponding plane \[P_{\alpha}:=\{ \lambda \in \hstar | (\lambda,\alpha) =0\}\] orthogonal to $\alpha$. We also have the corresponding real half space
 \[S_{\alpha}:=\{ \lambda \in \hstar | (\lambda,\alpha)  \in \bb{R}^{\geq 0}\}.\]

\begin{prop}
\label{propsubs}
Let $\mu \in P_{\epsilon_{i}} + (\bb{Z} +\frac{1}{2})\epsilon_{i}$. Then 
 $\ca{W}(M_{0})[(\mu+Q) \cap S_{-\epsilon_{i}}]$ is a proper nontrivial submodule of $\ca{W}(M_{0})[\mu]$, and there is an exact sequence
\[0 \longrightarrow \ca{W}(M_{0})[(\mu+Q) \cap S_{-\epsilon_{i}}]  \longrightarrow \ca{W}(M_{0})[\mu]  \longrightarrow \ca{W}(M_{0})[(\mu+Q) \cap S_{\epsilon_{i}}] \longrightarrow 0.\]
\end{prop}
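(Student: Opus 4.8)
The plan is to exhibit the claimed submodule concretely as the span of a subset of the weight basis, and then to check closure under the $\mathfrak{g}$-action by reading off the action table in the previous Proposition. Write $\mathcal{N}:=\ca{W}(M_{0})[(\mu+Q)\cap S_{-\epsilon_{i}}]$, which by definition is the span of those $v_{\lambda}$ with $\lambda\in\mu+Q$ and $(\lambda,\epsilon_{i})\le 0$; since $\mu\in P_{\epsilon_{i}}+(\bb{Z}+\frac12)\epsilon_{i}$, every such $\lambda$ satisfies $(\lambda,\epsilon_{i})=\lambda(h_{i})\in\bb{Z}+\frac12$, so the condition $(\lambda,\epsilon_{i})\le 0$ is the same as $\lambda(h_{i})\le -\frac12$, i.e. $\lambda(h_{i})\in\{-\frac12,-\frac32,-\frac52,\dots\}$.

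First I would verify that $\mathcal{N}$ is a submodule. The Cartan acts diagonally, so the issue is the root vectors. Going through the table: $X_{-2\epsilon_{i}}$, $X_{-\epsilon_{i}-\epsilon_{j}}$ and $X_{\epsilon_{j}-\epsilon_{i}}$ only decrease $\lambda(h_{i})$ (by $2$, $1$, $1$ respectively) or leave it unchanged (for $X_{\pm(\epsilon_{j}\pm\epsilon_{k})}$ with $j,k\ne i$), so they visibly preserve $\mathcal{N}$. The operators that increase $\lambda(h_{i})$ are $X_{2\epsilon_{i}}$ (by $2$), $X_{\epsilon_{i}+\epsilon_{j}}$ (by $1$), and $X_{\epsilon_{i}-\epsilon_{j}}$ (by $1$). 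For these, the potential escape happens exactly at the boundary of $\mathcal{N}$: $X_{\epsilon_{i}+\epsilon_{j}}v_{\lambda}$ and $X_{\epsilon_{i}-\epsilon_{j}}v_{\lambda}$ carry the factor $(\lambda(h_{i})+\frac12)$, which vanishes precisely when $\lambda(h_{i})=-\frac12$ — the only $h_i$-weight in $\mathcal{N}$ from which a step of $+1$ would leave $S_{-\epsilon_i}$; and $X_{2\epsilon_{i}}v_{\lambda}$ carries $(\lambda(h_{i})+\frac32)(\lambda(h_{i})+\frac12)$, which vanishes whenever $\lambda(h_{i})\in\{-\frac12,-\frac32\}$ — exactly the two $h_i$-weights in $\mathcal{N}$ from which a step of $+2$ would escape. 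Hence every vector that the $\mathfrak{g}$-action could push out of $\mathcal{N}$ is killed by the relevant structure constant, so $\mathcal{N}$ is a submodule; it is proper (weights with $\lambda(h_{i})\ge\frac12$ are missing) and nonzero.

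Next I would identify the quotient. As a vector space $\ca{W}(M_{0})[\mu]/\mathcal{N}$ has basis the images of $v_{\lambda}$ with $\lambda\in(\mu+Q)\cap S_{\epsilon_{i}}$, i.e. $\lambda(h_{i})\ge\frac12$, which is precisely the underlying space of $\ca{W}(M_{0})[(\mu+Q)\cap S_{\epsilon_{i}}]$. The quotient $\mathfrak{g}$-action is obtained from the table by setting to zero any basis vector whose weight leaves $S_{\epsilon_i}$; going through the lowering operators one checks that the only terms that get truncated are the ones landing on weights with $\lambda(h_{i})\le-\frac12$, and for $X_{-2\epsilon_i}$ applied to the weight $\lambda(h_i)=\tfrac12$ (landing at $-\tfrac32$, killed) and $X_{-\epsilon_i-\epsilon_j}$, $X_{\epsilon_j-\epsilon_i}$ applied to $\lambda(h_i)=\tfrac12$ (landing at $-\tfrac12$, killed). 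This is exactly the natural quotient $\mathfrak{g}$-module structure on $\ca{W}(M_{0})[(\mu+Q)\cap S_{\epsilon_{i}}]$, so the map $\ca{W}(M_{0})[\mu]\to\ca{W}(M_{0})[(\mu+Q)\cap S_{\epsilon_{i}}]$ sending $v_\lambda$ to $v_\lambda$ or to $0$ according to the sign of $\lambda(h_i)$ is a surjective $\mathfrak{g}$-homomorphism with kernel $\mathcal{N}$, giving the stated exact sequence.

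The only real subtlety — the part I would be most careful about — is checking that $S_{-\epsilon_i}$ really is closed under \emph{every} lowering step available in $Q$ and, dually, that the half-space $S_{\epsilon_i}$ is preserved by the quotient action; because $Q$ for $C_n$ contains the short roots $\pm(\epsilon_j-\epsilon_i)$ and $\pm(\epsilon_i+\epsilon_j)$ in addition to $\pm 2\epsilon_i$, one must confirm that no combination of these produces a "sideways" move that re-enters the forbidden region without being damped by a vanishing coefficient. This reduces to the three vanishing-coefficient observations above, together with the trivial remark that roots involving only indices $\ne i$ fix $\lambda(h_i)$; everything else is routine bookkeeping with the action table, which I would summarize rather than spell out term by term.
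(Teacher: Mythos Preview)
Your proof is correct and follows essentially the same approach as the paper's: both arguments check that a root vector $X_{\alpha}$ applied to a boundary weight $v_{\lambda}$ in $S_{-\epsilon_i}$ is killed by the explicit factor $(\lambda(h_i)+\tfrac12)$ or $(\lambda(h_i)+\tfrac32)(\lambda(h_i)+\tfrac12)$ exactly when $\lambda+\alpha$ would land in $S_{\epsilon_i}$, and both then identify the quotient with $\ca{W}(M_{0})[(\mu+Q)\cap S_{\epsilon_i}]$ using the fact that $(\mu+Q)\cap P_{\epsilon_i}=\varnothing$. Your treatment is more verbose (you walk through each root type separately, whereas the paper condenses it to the two boundary cases $\lambda(h_i)=-\tfrac12$ with $\alpha=\epsilon_i\pm\epsilon_j$ and $\lambda(h_i)=-\tfrac32$ with $\alpha=2\epsilon_i$), but the logic is identical.
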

\begin{proof}
Let $\mu \in P_{\epsilon_{i}} + (\bb{Z} +\frac{1}{2})\epsilon_{i}$. Note that $\mu+Q = Supp(\ca{W}(M_{0})[\mu])  \subset S_{-\ep_{i}} \cup S_{\ep_{i}}$, and that $S_{-\ep_{i}} \cap S_{\ep_{i}}=P_{\ep_{i}}$.
 Suppose there exists $v_{\lambda} \in \ca{W}(M_{0})[(\mu+Q) \cap S_{-\epsilon_{i}}]$ and a root vector $x_{\alpha}$ such that $\alpha + \lambda \in (\mu+Q) \cap S_{\epsilon_{i}}$.
 Then there are two possibilities, either $\lambda(h_{i})=-\frac{1}{2}$ and
 $\alpha=\epsilon_{i} \pm \epsilon_{j}$, or $\lambda(h_{i})=-\frac{3}{2}$ and $\alpha=2\epsilon_{i}$. In either case, $x_{\alpha}v_{\lambda}=0$ by the above table.
 Thus $\ca{W}(M_{0})[(\mu+Q) \cap S_{-\epsilon_{i}}]$ is a submodule. Since $(\mu+Q) \cap P_{\ep_{i}} = \varnothing$, the quotient is clearly isomorphic to $\ca{W}(M_{0})[(\mu+Q) \cap S_{\epsilon_{i}}]$.
\end{proof}
 In $\ca{W}(M_{0})^{ss}[\mu]$ the corresponding short exact sequence is split, so we have the following corollaries.
\begin{cor}
\label{corsub}
For $\mu \in P_{\epsilon_{i}} + (\bb{Z} +\frac{1}{2})\epsilon_{i}$ we have
\[\ca{W}(M_{0})^{ss}[\mu] = \ca{W}(M_{0})^{ss}[(\mu+Q) \cap S_{-\epsilon_{i}}] \oplus \ca{W}(M_{0})^{ss}[(\mu+Q) \cap S_{\epsilon_{i}}].\]
Moreover, since $\ca{W}(M_{0})^{ss}$ is the unique semisimple coherent family of degree $1$ by Proposition~\ref{cohprop},
we also have 
\[\ca{W}(M)^{ss}[\mu] = \ca{W}(M)^{ss}[(\mu+Q) \cap S_{-\epsilon_{i}}] \oplus \ca{W}(M)^{ss}[(\mu+Q) \cap S_{\epsilon_{i}}],\]
for any $M \in \mathfrak{M}$. 
\end{cor}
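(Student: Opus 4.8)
The plan is to deduce Corollary~\ref{corsub} directly from Proposition~\ref{propsubs} together with the description of the semisimplification of a degree-$1$ coherent family given just before Proposition~\ref{cohprop}, and then to invoke the uniqueness in Proposition~\ref{cohprop}(ii). First I would fix $\mu \in P_{\epsilon_{i}} + (\bb{Z}+\frac{1}{2})\epsilon_{i}$ and recall from Proposition~\ref{propsubs} the short exact sequence
\[0 \longrightarrow \ca{W}(M_{0})[(\mu+Q)\cap S_{-\epsilon_{i}}] \longrightarrow \ca{W}(M_{0})[\mu] \longrightarrow \ca{W}(M_{0})[(\mu+Q)\cap S_{\epsilon_{i}}] \longrightarrow 0.\]
Passing to semisimplifications, $\ca{W}(M_{0})^{ss}[\mu]$ has the same simple subquotients as $\ca{W}(M_{0})[\mu]$, hence the same simple constituents as the direct sum of the two end terms; since $\ca{W}(M_{0})^{ss}[\mu]$ is semisimple it must be isomorphic to the direct sum of the semisimplifications of those two end terms, giving the first displayed equation. (The one point to make explicit is that the submodule and quotient in the sequence are themselves supported on unions of $Q$-cosets — namely the single cosets $\mu+Q$ intersected with the two half-spaces — so that ``$\ca{W}(M_{0})^{ss}[(\mu+Q)\cap S_{\pm\epsilon_{i}}]$'' makes sense and is computed componentwise.)

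Next I would observe that $\ca{W}(M_{0})$ is a coherent family of degree $1$ by Proposition~\ref{freegivescf} applied to $M_{0} \in \mathfrak{M}$ (Proposition~\ref{simpleprop}), and that for any $M \in \mathfrak{M}$ the module $\ca{W}(M)$ is likewise a coherent family of degree $1$. Since $M_{0}$ is simple, $\ca{W}(M_{0})$ is an irreducible coherent family, so $\ca{W}(M_{0})^{ss}$ is a semisimple irreducible coherent family of degree $1$; by Proposition~\ref{cohprop}(ii) this is the \emph{unique} such family, hence $\ca{W}(M)^{ss} \simeq \ca{W}(M_{0})^{ss}$ for every $M \in \mathfrak{M}$. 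The decomposition of the first display is a statement purely about this one fixed semisimple coherent family — it is the decomposition into the submodule supported on $(\mu+Q)\cap S_{-\epsilon_{i}}$ and the submodule supported on $(\mu+Q)\cap S_{\epsilon_{i}}$ (these are submodules because, by the action table, no root vector maps a weight vector across $P_{\epsilon_{i}}$, which is the content extracted in the proof of Proposition~\ref{propsubs}, and here there is nothing on $P_{\epsilon_{i}}$ itself). Transporting this decomposition along the isomorphism $\ca{W}(M)^{ss}\simeq\ca{W}(M_{0})^{ss}$ yields the second display for arbitrary $M\in\mathfrak{M}$.

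The only genuinely delicate point is bookkeeping with supports: one must check that in $\ca{W}(M_{0})^{ss}$ (equivalently in the unique semisimple degree-$1$ coherent family) the set $(\mu+Q)\cap S_{-\epsilon_{i}}$ really does span a submodule and not merely a subquotient, i.e.\ that the ``modification to zero'' prescription defining $\ca{M}^{ss}$ kills exactly the arrows $X_{-\epsilon_{i}\mp\epsilon_{j}}$ and $X_{-2\epsilon_{i}}$ that would otherwise cross from $S_{\epsilon_{i}}$ back into $S_{-\epsilon_{i}}$ through the wall $\lambda(h_{i}) = -\tfrac{1}{2}$ or $-\tfrac32$ — and symmetrically the raising arrows across the wall from $S_{-\epsilon_i}$ into $S_{\epsilon_i}$ are already zero by Proposition~\ref{propsubs}. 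Once that is in hand the splitting is automatic and the corollary follows; I do not expect any further obstacle, since everything else is a formal consequence of exactness, semisimplicity, and the uniqueness statement already quoted.
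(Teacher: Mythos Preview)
Your approach is essentially the paper's own: the paper simply remarks that in the semisimplification the short exact sequence of Proposition~\ref{propsubs} splits, and then invokes the uniqueness in Proposition~\ref{cohprop}(ii) to pass to arbitrary $M\in\mathfrak{M}$. One small correction: irreducibility of $\ca{W}(M_{0})$ (and of $\ca{W}(M)$) does not follow from simplicity of $M_{0}$---indeed $\ca{W}$ does not preserve simplicity---but rather from the fact that a degree-$1$ coherent family has $1$-dimensional weight spaces, which are automatically simple $\ug_{0}$-modules.
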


\begin{cor}
\label{corsubs}
Let $\lambda_{0}:=-\frac{1}{2}\sum_{i=1}^{n} \epsilon_{i}$. The module $\ca{W}(M_{0})^{ss}[\lambda_{0}]$ is the direct sum of the $2^{n}$ simple
 submodules $\ca{W}(M_{0})^{ss}[(\lambda_{0}+Q) \bigcap_{i=1}^{n} S_{\pm \epsilon_{i}}]$. As in Corollary~\ref{corsub}, the same holds with $M_{0}$ replaced by any $M \in \mathfrak{M}$.
\end{cor}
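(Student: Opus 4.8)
The plan is to deduce Corollary~\ref{corsubs} from Corollary~\ref{corsub} by iterating the coset-splitting one coordinate at a time. First I would observe that $\lambda_0 = -\frac12\sum_{i=1}^n \epsilon_i$ satisfies $\lambda_0 \in P_{\epsilon_i} + (\mathbb{Z}+\frac12)\epsilon_i$ simultaneously for every $i$ — indeed $(\lambda_0,\epsilon_i) = -\frac12$, which lies in $\frac12 + \mathbb{Z}$, and subtracting the $\epsilon_i$-component leaves $-\frac12\sum_{j\neq i}\epsilon_j \in P_{\epsilon_i}$. Hence Corollary~\ref{corsub} applies with $\mu = \lambda_0$, giving a splitting of $\ca{W}(M_0)^{ss}[\lambda_0]$ into the $S_{-\epsilon_1}$-part and the $S_{\epsilon_1}$-part. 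The key point is that each of these two summands is again a coherent-family component of the form $\ca{W}(M_0)^{ss}[\nu]$ for a suitable $\nu$ lying in $P_{\epsilon_i}+(\mathbb{Z}+\frac12)\epsilon_i$ for all the \emph{remaining} indices $i\geq 2$, so Corollary~\ref{corsub} can be applied again in the second coordinate, and so on.

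The main induction would run on $k$, showing that for each $k\leq n$,
\[
\ca{W}(M_0)^{ss}[\lambda_0] \;=\; \bigoplus_{(\eta_1,\dots,\eta_k)\in\{+,-\}^k} \ca{W}(M_0)^{ss}\Big[(\lambda_0+Q)\cap \bigcap_{i=1}^{k} S_{\eta_i \epsilon_i}\Big],
\]
the inductive step being: apply Corollary~\ref{corsub} to each of the $2^{k-1}$ summands already obtained, splitting in the $k$-th coordinate. For this to be legitimate I must check two things at each stage. First, that the relevant summand really is of the form $\ca{W}(M_0)^{ss}[\mu']$ with $\mu'$ a representative of a single $Q$-coset — this holds because $\bigcap_{i<k}S_{\eta_i\epsilon_i}$ intersected with $\lambda_0+Q$ is contained in a single coset (all these sets sit inside $\lambda_0+Q$ to begin with), and restricting a semisimple coherent family to a union of $Q$-cosets that happens to be a single coset just gives back a $[\mu']$-component. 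Second, that $\mu'$ still lies in $P_{\epsilon_k}+(\mathbb{Z}+\frac12)\epsilon_k$: since $\mu'$ differs from $\lambda_0$ by an element of $Q$, and $Q = \mathbb{Z}\Delta$ for type $C_n$ equals $\{\sum a_i\epsilon_i : a_i\in\mathbb{Z},\ \sum a_i \in 2\mathbb{Z}\}$, the $\epsilon_k$-coordinate of $\mu'$ is still in $-\frac12+\mathbb{Z}$, and its projection onto $P_{\epsilon_k}$ is whatever it is — the condition is satisfied. After $k=n$ we get the decomposition into $2^n$ summands; simplicity of each summand follows because by construction its support meets each half-space boundary $P_{\epsilon_i}$ trivially (the $\mathbb{Z}+\frac12$ shift guarantees $(\lambda_0+Q)\cap P_{\epsilon_i}=\varnothing$), so by the semisimplification construction recalled after Proposition 2.11 each is a single simple subquotient, hence a simple submodule of a semisimple module. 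Finally, the last sentence — that the same holds with $M_0$ replaced by any $M\in\mathfrak{M}$ — is immediate from the second half of Corollary~\ref{corsub}: since $\ca{W}(M)^{ss}$ is isomorphic to $\ca{W}(M_0)^{ss}$ by Proposition~\ref{cohprop}(ii), every step of the induction transports verbatim.

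I expect the only real obstacle to be bookkeeping: making precise the claim ``a component $\ca{W}(M_0)^{ss}[(\lambda_0+Q)\cap\bigcap_{i<k}S_{\eta_i\epsilon_i}]$ is itself of the form $\ca{W}(M_0)^{ss}[\mu']$ for a suitable $\mu'$,'' and checking that the half-space conditions defining the finer pieces are compatible so that no piece collapses to zero. The cleanest way to handle this is to note that $\bigcap_{i=1}^n S_{\eta_i\epsilon_i}$ is the closed ``orthant'' $\{\lambda : \eta_i(\lambda,\epsilon_i)\geq 0\ \forall i\}$ and that $(\lambda_0+Q)$ meets each such orthant's interior-direction consistently because $\lambda_0$ itself has all coordinates equal to $-\frac12$ and $Q$ only shifts by integers summing to an even number; one should just track that the support of each summand is exactly the set of $\lambda\in\lambda_0+Q$ with $\eta_i\lambda(h_i) > 0$ for all $i$ (the strict inequality coming from $P_{\epsilon_i}\cap(\lambda_0+Q)=\varnothing$), which is manifestly nonempty, e.g.\ it contains $\sum_i \eta_i(\text{suitable half-integer})\epsilon_i$ adjusted to lie in $Q+\lambda_0$. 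Once this is set up, the corollary is a two-line consequence of $n$ applications of Corollary~\ref{corsub}.
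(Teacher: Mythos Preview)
Your overall strategy --- iterate the splitting of Corollary~\ref{corsub} across the $n$ coordinates --- is the intended one and matches the paper, which states the corollary without proof as an immediate consequence of Corollary~\ref{corsub}. However, your inductive step contains a genuine error. You claim that after $k-1$ splits, each summand
\[
\ca{W}(M_0)^{ss}\Big[(\lambda_0+Q)\cap\bigcap_{i<k}S_{\eta_i\epsilon_i}\Big]
\]
is ``of the form $\ca{W}(M_0)^{ss}[\mu']$ for a suitable $\mu'$,'' justifying this by saying the support is ``contained in a single coset.'' But \emph{contained in} a $Q$-coset is not the same as \emph{equal to} a $Q$-coset: the set $(\lambda_0+Q)\cap\bigcap_{i<k}S_{\eta_i\epsilon_i}$ is a proper subset of $\lambda_0+Q$ once $k\geq 2$, so the summand is strictly smaller than $\ca{W}(M_0)^{ss}[\mu']$ for every $\mu'\in\lambda_0+Q$. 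Corollary~\ref{corsub} as stated therefore does not apply to it, and your induction does not go through as written.

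The repair is simple and avoids induction altogether. Apply Corollary~\ref{corsub} with $\mu=\lambda_0$ once for \emph{each} index $i=1,\dots,n$; this is legitimate since, as you correctly observed, $\lambda_0\in P_{\epsilon_i}+(\mathbb{Z}+\tfrac12)\epsilon_i$ for every $i$. You obtain $n$ different decompositions of the same module $\ca{W}(M_0)^{ss}[\lambda_0]$, each of the form $A_i^{+}\oplus A_i^{-}$ with $A_i^{\pm}=\ca{W}(M_0)^{ss}[(\lambda_0+Q)\cap S_{\pm\epsilon_i}]$. Now take the common refinement: for each sign vector $\eta\in\{+,-\}^n$, set $A^\eta:=\bigcap_i A_i^{\eta_i}$. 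Each $A^\eta$ is a submodule (intersection of submodules), equals $\ca{W}(M_0)^{ss}[(\lambda_0+Q)\cap\bigcap_i S_{\eta_i\epsilon_i}]$ since all the $A_i^{\pm}$ are defined by support conditions, and the $2^n$ orthants partition $\lambda_0+Q$ (using $(\lambda_0+Q)\cap P_{\epsilon_i}=\varnothing$). Hence $\ca{W}(M_0)^{ss}[\lambda_0]=\bigoplus_\eta A^\eta$. Your remarks on nonemptiness and on simplicity (via bijectivity of root-vector actions within an orthant) then finish the argument, and the extension to arbitrary $M\in\mathfrak{M}$ via Proposition~\ref{cohprop}(ii) is fine.
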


\begin{rmk}
 Note that the modules $\ca{W}(M_{0})[\lambda]$ discussed above are all indecomposable projective modules in the category
of weight $\mathfrak{sp}(2n)$-modules with bounded weight multiplicities. This class of modules was studied in \cite{GS}.
\end{rmk}

\begin{ex}
\label{expic}
{\rm 
Here follows an attempt to visualize the situation for $n=2$. Consider the module $\ca{W}(M_{0})$.
\[
\def\latticebody{%
\ifnum\latticeA=0 \ifnum\latticeB=0\drop{.} %
\else\drop{.}\fi\else\drop{.}\fi}
\xy *\xybox{0;<1.2pc,1.2pc>:<-1.2pc,1.2pc>::
,0,{\croplattice{-9}9{-9}9{-9}9{-9}9}
 ,(0.5,0),{\ar@{}|(1.2){\times} (0.5,0)}
 ,(-4.5,-4.5),{\ar@{.} (4.5,4.5)}
 ,(-5,-4),{\ar@{.} (4,5)}
 ,(-4.5,4.5),{\ar@{.} (4.5,-4.5)}
 ,(-5,4),{\ar@{.} (4,-5)}
 ,(-2,2),{\ar@{->}|(1.1){0} (-1,3)}
 ,(-2,2),{\ar@{->}|(1.1){0} (-1,2)}
 ,(-2,2),{\ar@{->}|(1.1){0} (-2,3)}
 ,(-4,3),{\ar@{->}|(1.1){0} (-3,4)}
 ,(-2,-2),{\ar@{->}|(1.1){0} (-1,-3)}
 ,(-2,-2),{\ar@{->}|(1.1){0} (-1,-2)}
 ,(-2,-2),{\ar@{->}|(1.1){0} (-2,-3)}
 ,(-4,-3),{\ar@{->}|(1.1){0} (-3,-4)}
 ,(0.5,0),{\ar@{->} (1.5,0)}
 ,(0.5,0),{\ar@{->} (0.5,1)}
 ,(0.5,0),{\ar@{->} (-0.5,0)}
 ,(0.5,0),{\ar@{->} (0.5,-1)}
 ,(0.5,0),{\ar@{->}|(1.1){2\ep_{2}} (1.5,1)}
 ,(0.5,0),{\ar@{->} (-0.5,-1)}
 ,(0.5,0),{\ar@{->}|(1.2){2\ep_{1}} (1.5,-1)}
 ,(0.5,0),{\ar@{->} (-0.5,1)}}
\endxy
\]
The picture above describes the submodule $\ca{W}(M_{0})[\lambda_{0}]$. The picture is of $\hstar$, or more precisely, its real affine subspace $\lambda_{0}+\mathbb{R}\Delta$.
 The root system of $\mathfrak{sp}(4)$ is pictured in the center. The dots indicate the support of  $\ca{W}(M_{0})[\lambda_{0}]$, so that each dot corresponds to a one-dimensional weight space.
 The dotted lines are the four hyperplanes 
\[\{ \lambda \in \hstar | \lambda(h_{1})=-\frac{1}{2}\}, \quad \{ \lambda \in \hstar | \lambda(h_{1})=-\frac{3}{2}\}\]
\[\{ \lambda \in \hstar | \lambda(h_{2})=-\frac{1}{2}\}, \text{ and } \{ \lambda \in \hstar | \lambda(h_{2})=-\frac{3}{2}\}.\]
The arrows with zeroes going from the hyperplanes indicate that the action of the corresponding root vector is zero on that hyperplane
 (compare with the table above giving the action of root vectors on  $\ca{W}(M_{0})$). Outside the four hyperplanes, the action of all root vectors on weight spaces are bijective.
We see from the picture that the basis vectors corresponding to dots contained in the left half space of the picture span a submodule of $\ca{W}(M_{0})[\lambda_{0}]$.
 This submodule is precisely $\ca{W}(M_{0})[(\lambda_{0}+Q) \cap S_{-\ep_{1}}]$ from Proposition~\ref{propsubs}. Similarly, the bottom half space of the picture corresponds to the submodule
  $\ca{W}(M_{0})[(\lambda_{0}+Q) \cap S_{-\ep_{2}}]$. The minimal nonzero submodule of $\ca{W}(M_{0})[\lambda_{0}]$ is 
 $\ca{W}(M_{0})[(\lambda_{0}+Q) \cap S_{-\ep_{1}} \cap S_{-\ep_{2}}]$ which corresponds to the lower left quadrant. It is the intersection of the two aforementioned submodules.
 From the picture it is also clear that $\ca{W}(M_{0})[\lambda_{0}]$ has length four with subquotients corresponding to support contained in the four quadrants.
The situation above is comparable to what was studied in the paper~\cite{BKLM}.
}
\end{ex}

\subsection{Twisting $M_{0}$ by automorphisms}
\label{sec3dot4}
In general, if $(M, \cdot)$ is a module and $\varphi \in Aut(\mathfrak{g})$ we can define a new action $\bullet$ of $\mathfrak{g}$ on $M$ by $x \bullet m := \varphi(x) \cdot m$.
The resulting module is denoted $M^{\varphi}$. Moreover, if $M$ is $\uh$-free and $\varphi(\mathfrak{h})=\mathfrak{h}$ we obtain a $\uh$-free module $M^{\varphi}$.

The following proposition describes two important types of automorphisms.
 We shall write $min(M)$ for the unique minimal nonzero submodule of $M$ (when it exists).
\begin{prop}
\label{twistprop}
Let $M \in \mathfrak{M}$.
\begin{enumerate}
 \item\label{eq7} Let $\varphi=exp(ad \: h_{0})$ for some fixed $h_{0} \in \hstar$. Then for $\alpha \in \Delta$ we have 
$\varphi(X_{\alpha})= e^{\alpha(h_{0})} X_{\alpha}$ and $\varphi$ fixes $\mathfrak{h}$ pointwise. In particular, this implies that 
\[Supp(min(\ca{W}(M^{\varphi})[\lambda_{0}]))=Supp(min(\ca{W}(M)[\lambda_{0}])).\]

 \item\label{eq8} For each $1 \leq k \leq n$, let $\varphi_{k}:= exp(ad \: X_{k})exp(-ad \: X_{-k})exp(ad \: X_{k})$,
 where $X_{k} := \frac{1}{2} X_{2\ep_{k}}$ and $X_{-k} := -\frac{1}{2} X_{-2\ep_{k}}$. 
Then $\varphi_{k}$ stabilizes $\mathfrak{h}$ and for $\alpha \in \Delta$ we have $\varphi_{k}(X_{\alpha}) = \pm X_{w_{k}(\alpha)}$ where $w_{k}$ is the element of the Weyl group corresponding to the simple reflection
 in the hyperplane orthogonal to the long root $2\epsilon_{k}$.  In particular, this implies that 
\[Supp(min(\ca{W}(M^{\varphi_{k}})[\lambda_{0}]))=w_{k} \big( Supp(min(\ca{W}(M)[\lambda_{0}])) \big).\]
\end{enumerate}
\end{prop}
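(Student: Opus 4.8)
The plan is to verify both statements by direct computation inside $\mathcal{U}(\mathfrak{g})$, using the standard formula $\exp(\mathrm{ad}\,x)(y) = \sum_{k\ge 0}\frac{1}{k!}(\mathrm{ad}\,x)^k(y)$, and then transporting the conclusion to the supports via the weighting functor. For part \eqref{eq7}, since $[h_0, X_\alpha] = \alpha(h_0)X_\alpha$, the operator $\mathrm{ad}\,h_0$ is diagonalizable on each root space with eigenvalue $\alpha(h_0)$, so $\exp(\mathrm{ad}\,h_0)(X_\alpha) = e^{\alpha(h_0)}X_\alpha$; and $\mathrm{ad}\,h_0$ kills $\mathfrak{h}$, so $\varphi$ fixes $\mathfrak{h}$ pointwise. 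In particular $\varphi(\mathfrak h)=\mathfrak h$, so $M^\varphi\in\mathfrak M$. To get the statement about supports, I would observe that in $\mathcal W(M^\varphi)$ the action of $X_\alpha$ is $X_\alpha\bullet v_\lambda = \varphi(X_\alpha)\cdot v_\lambda = e^{\alpha(h_0)}(X_\alpha\cdot v_\lambda)$, a nonzero rescaling; hence $X_\alpha$ acts by zero on $\mathcal W(M^\varphi)_\lambda$ exactly when it does on $\mathcal W(M)_\lambda$. Since the minimal nonzero submodule of $\mathcal W(M)[\lambda_0]$ is determined (via Corollary~\ref{corsubs} and the uniqueness of the semisimple coherent family) purely by the loci where positive root vectors vanish, rescaling leaves its support unchanged. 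I would also note $\varphi$ is an honest automorphism since $\mathrm{ad}\,h_0$ is a semisimple derivation, so $\exp$ of it is a well-defined automorphism of the finite-dimensional $\mathfrak g$.

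For part \eqref{eq8}, the element $\varphi_k = \exp(\mathrm{ad}\,X_k)\exp(-\mathrm{ad}\,X_{-k})\exp(\mathrm{ad}\,X_k)$ is the standard lift to $\mathrm{Aut}(\mathfrak g)$ of the simple reflection $w_k$ attached to the $\mathfrak{sl}_2$-triple $(X_k, h_k, X_{-k})$ spanning the root-$\mathfrak{sl}_2$ for the long root $2\epsilon_k$. Here I first check this is an $\mathfrak{sl}_2$-triple: with $X_k=\tfrac12 X_{2\epsilon_k}$, $X_{-k}=-\tfrac12 X_{-2\epsilon_k}$, one has $[X_k,X_{-k}] = h_k$ and $[h_k,X_k]=2X_k$, $[h_k,X_{-k}]=-2X_{-k}$ — a quick computation with the explicit matrices $2e_{k,n+k}$ and $-2e_{n+k,k}$. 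Then by the general theory of $\mathfrak{sl}_2$-triples (equivalently, by exponentiating the adjoint action of $SL_2$), $\varphi_k$ stabilizes $\mathfrak h$ and acts on it as the reflection $w_k$, and permutes the root spaces by $\varphi_k(X_\alpha) = \pm X_{w_k(\alpha)}$ with signs that are immaterial for our purposes. Granting $\varphi_k(\mathfrak h)=\mathfrak h$, we again get $M^{\varphi_k}\in\mathfrak M$.

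To finish \eqref{eq8}, I would trace through the weighting functor exactly as in the previous part. The twisted action on $\mathcal W(M^{\varphi_k})$ sends the weight-$\lambda$ space to the weight-$w_k(\lambda)$ space: indeed $h_i\bullet v_\lambda = \varphi_k(h_i)\cdot v_\lambda = (w_k\cdot h_i)\cdot v_\lambda$, which acts as $\lambda(w_k\cdot h_i) = (w_k^{-1}\lambda)(h_i)$, so the functorial identification $\mathcal W(M^{\varphi_k})_\lambda$ corresponds to $\mathcal W(M)_{w_k^{-1}\lambda}$ up to the twist, and $X_\alpha$ acting by $\pm X_{w_k(\alpha)}$ matches the structure transported by $w_k$. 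Consequently a weight $\lambda$ lies in the support of $\min(\mathcal W(M^{\varphi_k})[\lambda_0])$ iff $w_k^{-1}\lambda$ lies in the support of $\min(\mathcal W(M)[\lambda_0])$; since $w_k$ fixes $\lambda_0 = -\tfrac12\sum\epsilon_i$ (it merely permutes/negates coordinates among the $-\tfrac12$'s, so up to $Q$ it fixes the coset $\lambda_0+Q$, which is all that matters here), this says precisely $\mathrm{Supp}(\min(\mathcal W(M^{\varphi_k})[\lambda_0])) = w_k(\mathrm{Supp}(\min(\mathcal W(M)[\lambda_0])))$.

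The main obstacle I expect is the bookkeeping in \eqref{eq8}: one must be careful that the twist $M\mapsto M^{\varphi_k}$ interacts correctly with the weighting functor on the level of supports, since $\mathcal W(M^{\varphi_k})$ is not literally $\mathcal W(M)$ with relabelled weight spaces but is isomorphic to it via an isomorphism that shifts weights by $w_k$, and because $\mathcal W(M)[\lambda_0]$ refers to a $Q$-coset rather than a single weight, one needs that $w_k$ preserves the coset $\lambda_0 + Q$ — which holds because $w_k(\alpha)-\alpha\in Q$ for every root and $w_k$ permutes/sign-changes the $\epsilon_i$, so $w_k(\lambda_0)-\lambda_0\in Q$. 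The remaining verifications ($\mathfrak{sl}_2$-triple relations, the $\pm$ signs) are routine matrix calculations that I would either carry out explicitly for one representative case or cite from the standard theory of principal $\mathfrak{sl}_2$-triples and Weyl group lifts.
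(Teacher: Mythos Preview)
Part~\eqref{eq7} is correct and matches the paper: both compute $\exp(\mathrm{ad}\,h_0)$ on root vectors and on $\mathfrak h$, then observe that the root-vector actions on $\mathcal W(M^\varphi)$ are nonzero rescalings of those on $\mathcal W(M)$, so the submodule lattices agree.

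In part~\eqref{eq8} your identification of $\varphi_k$ as the standard lift of $w_k$ via the $\mathfrak{sl}_2$-triple $(X_k,h_k,X_{-k})$ is fine; the paper does the same thing by an explicit calculation yielding $\varphi_k(h_k)=-h_k$ and $\varphi_k(X_\alpha)=X_{w_k\alpha}$. The problem is your passage to the support statement. You assert that $w_k$ preserves the coset $\lambda_0+Q$, i.e.\ that $w_k(\lambda_0)-\lambda_0=\epsilon_k\in Q$. This is false in type $C_n$: every root $\pm\epsilon_i\pm\epsilon_j$, $\pm 2\epsilon_i$ has even coordinate sum, hence $Q\subset\{\sum a_i\epsilon_i:\sum a_i\in 2\mathbb Z\}$, and $\epsilon_k\notin Q$. (Your justification ``$w_k(\alpha)-\alpha\in Q$ for every root'' is true but irrelevant, since $\lambda_0\notin Q$.) Consequently your weight-relabelling only gives $\mathcal W(M^{\varphi_k})[\lambda_0]\simeq \mathcal W(M)[w_k\lambda_0]$ up to sign twists, which is a \emph{different} $Q$-component, and the argument does not close as written.

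The paper avoids the coset issue by not trying to identify the two $[\lambda_0]$-components. It argues instead the biconditional: $\mathcal W(M)[(\lambda_0+Q)\cap S_{\epsilon_i}]$ is a submodule of $\mathcal W(M)[\lambda_0]$ if and only if $\mathcal W(M^{\varphi_k})[(\lambda_0+Q)\cap w_kS_{\epsilon_i}]$ is a submodule of $\mathcal W(M^{\varphi_k})[\lambda_0]$. This is precisely what Theorem~\ref{mainthm} needs: one only has to know which of the $2^n$ quadrants $\bigcap_i S_{\pm\epsilon_i}$ contains the minimal submodule, and that twisting by $\varphi_k$ flips the $k$-th sign. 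To repair your argument, either establish this half-space biconditional directly from $X_\alpha\bullet v'_\lambda=\pm X_{w_k\alpha}\cdot v_{w_k\lambda}$, or phrase the conclusion in terms of quadrants rather than a literal equality of supports.
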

\begin{proof}
 Since $\mathfrak{h}$ is commutative, all but the first term of $exp(ad \: h_{0})$ is zero on $\mathfrak{h}$, so on $\mathfrak{h}$, $\varphi$ is the identity. Next we have
\[exp(ad \: h_{0})(X_{\alpha}) = \sum_{k=0}^{\infty} (ad \: h_{0})^{k}(X_{\alpha}) = \sum_{k=0}^{\infty} \alpha(h_{0})^{k} X_{\alpha} = e^{\alpha(h_{0})} X_{\alpha},\]
as stated. Thus the action of $X_{\alpha}$ on $\ca{W}(M^{\varphi})$ is just a rescaling of its action on $\ca{W}(M)$. Thus if we identify $\ca{W}(M^{\varphi})$ and $\ca{W}(M)$ as sets,
 they will have the same submodules. Thus \eqref{eq7} holds.\\
 
To prove \eqref{eq8}, we first note that $[X_{k},X_{-k}] = h_{k}$, $[h_{k},X_{k}] = 2X_{k}$, $[h_{k},X_{-k}] = -2X_{-k}$. Thus it is easy to compute:
\begin{align*}
exp(ad \: X_{k})&exp(-ad \: X_{k})exp(ad \: X_{k})(h_{k}) =exp(ad \: X_{k})exp(-ad \: X_{-k})(h_{k}-2X_{k})\\
 &= exp(ad \: X_{k})(h_{k}-2X_{k} + [h_{k}-2X_{k},X_{-k}] - [[X_{k},X_{-k}],X_{-k}])\\
 &= exp(ad \: X_{k})(h_{k}-2X_{k} + -2X_{-k}-2h_{k} +2 X_{-k})\\
 &= exp(ad \: X_{k})(-h_{k}-2X_{k})\\
 &= -h_{k}-2X_{k} + [X_{k},-h_{k}-2X_{k}] = -h_{k},
\end{align*}
while clearly $\varphi_{k}(h_{i}) = h_{i}$ for $i \neq k$. Thus $\varphi_{k}$ stabilizes $\mathfrak{h}$.
Similar calculations show that $\varphi_{k}(X_{2\ep_{k}}) = X_{-2\ep_{k}}$ and $\varphi_{k}(X_{-2\ep_{k}}) = X_{2\ep_{k}}$.
One can check that for $i \neq k$ we have $\varphi_{k}(X_{\ep_{k} \pm \ep_{i}}) = X_{-\ep_{k} \pm \ep_{i}} $
 and $\varphi_{k}(X_{-\ep_{k} \pm \ep_{i}}) = X_{\ep_{k} \pm \ep_{i}}$.
We clearly also have $\varphi_{k}(X_{\alpha}) = X_{\alpha}$ whenever $\alpha$ is orthogonal to $\ep_{k}$.
It follows that $\varphi_{k}$ precisely acts on root vectors by Weyl group element $w_{k}$ corresponding to reflection in $P_{\ep_{k}}$: we have $\varphi_{k}(X_{\alpha}) = X_{w_{k}\alpha}$ for all $\alpha \in \Delta$.
Thus it follows that $\ca{W}(M)[(\lambda_{0}+Q) \cap S_{\ep_{i}}]$ is a submodule of $\ca{W}(M)[\lambda_{0}]$ if and only if
 $\ca{W}(M^{\varphi_{k}})[(\lambda_{0}+Q) \cap w_{k}S_{\ep_{i}}]$ is a submodule of $\ca{W}(M^{\varphi_{k}})[\lambda_{0}]$. Claim \eqref{eq8} follows.
\end{proof}

\subsection{Uniqueness of $M_{0}$ up to twisting}
\label{sec3dot5}
Define a relation $\sim$ on $\mathfrak{M}$ by $M \sim M'$ if and only if there exists $\varphi \in Aut(\mathfrak{sp}(2n))$ such that $M' \simeq M^{\varphi}$.
 This is an equivalence relation. 

\begin{lemma}
\label{scaling}
 Let $M, M' \in \mathfrak{M}$ be two modules, both identified with $\poln$ as $\uh$-modules. Suppose that for each root $\alpha \in \Delta$ there exist a nonzero constant $c_{\alpha}$ such that
 $x_{\alpha} \cdot 1_{M'} =c_{\alpha} ( x_{\alpha} \cdot 1_{M})$. Then $M \sim M'$.
\end{lemma}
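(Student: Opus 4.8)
The plan is to realize the rescaling $x_\alpha \cdot 1_M \mapsto c_\alpha (x_\alpha \cdot 1_M)$ as the effect of an automorphism $\varphi \in Aut(\mathfrak{sp}(2n))$ that fixes $\mathfrak h$ pointwise, so that twisting by $\varphi$ sends $M$ to (a module isomorphic to) $M'$. By Lemma~\ref{rootact}, the action of both $M$ and $M'$ is completely determined by the scalars-times-automorphisms $x_\alpha \cdot 1$, so once $\varphi$ is found with $\varphi(X_\alpha) = c_\alpha X_\alpha$ for all $\alpha \in \Delta$ and $\varphi|_{\mathfrak h} = \mathrm{id}$, the identity map $\poln \to \poln$ will intertwine the $\bullet$-action on $M^\varphi$ with the $\cdot$-action on $M'$: indeed $x_\alpha \bullet 1_M = \varphi(x_\alpha)\cdot 1_M = c_\alpha(x_\alpha \cdot 1_M) = x_\alpha \cdot 1_{M'}$, and the $\mathfrak h$-action is literally the same on both, so by Lemma~\ref{rootact} the two $\ug$-module structures coincide, giving $M' \simeq M^\varphi$ and hence $M \sim M'$.

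So the real content is to produce such a $\varphi$. The natural candidate is $\varphi = \exp(\mathrm{ad}\, h_0)$ for a suitable $h_0 \in \mathfrak h$, since by Proposition~\ref{twistprop}\eqref{eq7} this fixes $\mathfrak h$ pointwise and scales $X_\alpha$ by $e^{\alpha(h_0)}$. Thus it suffices to find $h_0$ with $e^{\alpha(h_0)} = c_\alpha$ for every $\alpha \in \Delta$. Writing $h_0 = \sum_{i} t_i h_i$ with $t_i \in \bbc$, the condition becomes a system of linear equations $\alpha(h_0) = \log c_\alpha$ (choosing any branch of the logarithm), i.e. $\sum_i t_i \alpha(h_i)$ must equal a prescribed value for each root. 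I would first fix the values on the simple roots $\alpha_1, \dots, \alpha_n$ — this determines $t_1, \dots, t_n$ uniquely since the simple roots form a basis of $\hstar$ — and then check consistency on the remaining roots.

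The main obstacle, then, is the consistency condition: the $c_\alpha$ are not arbitrary, and one must verify that the constraint "$x_\alpha \cdot 1$ rescales to $c_\alpha (x_\alpha \cdot 1)$ for a genuine module $M'$" forces the $c_\alpha$ to be multiplicative along root addition, i.e. $c_{\alpha+\beta} = c_\alpha c_\beta$ whenever $\alpha, \beta, \alpha+\beta \in \Delta$. This is exactly what is needed for $\alpha \mapsto \log c_\alpha$ to extend linearly from the simple roots to all of $Q$. I expect this to follow from the commutation relations \eqref{eq2}: for roots with $[X_\alpha, X_\beta] = N_{\alpha,\beta} X_{\alpha+\beta}$ and $N_{\alpha,\beta} \neq 0$, comparing the bracket relation for the $M'$-action (expressed via $x_\alpha \cdot 1_{M'} = c_\alpha(x_\alpha\cdot 1_M)$ and Lemma~\ref{rootact}) with the one that already holds for $M$ yields $c_\alpha c_\beta = c_{\alpha+\beta}$ after cancelling the common nonzero polynomial factors; one checks that for type $C_n$ every non-simple root arises as such a sum, so the simple-root data propagates consistently. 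With multiplicativity in hand, the linear system for $h_0$ is solvable, $\varphi = \exp(\mathrm{ad}\, h_0)$ does the job, and the lemma follows.
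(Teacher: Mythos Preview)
Your approach is essentially the same as the paper's: solve $e^{\alpha(h_0)} = c_\alpha$ on a base $\Sigma$ of simple roots and take $\varphi = \exp(\mathrm{ad}\,h_0)$. The paper's proof is terser---it simply asserts that all $c_\alpha$ are determined by $\{c_\alpha \mid \alpha \in \Sigma\}$---whereas you correctly fill in this step by deriving the multiplicativity $c_{\alpha+\beta} = c_\alpha c_\beta$ from the bracket relations (the required cancellation is legitimate since $X_\gamma \cdot 1_M \neq 0$ for every root $\gamma$, else $[X_\gamma,X_{-\gamma}]$ would act as zero rather than as a nonzero element of $\mathfrak h$).
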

\begin{proof}
 Let $\Sigma$ be a basis for $\Delta$. Then all constants $c_{\alpha}$ are determined by $\{c_{\alpha} | \alpha \in \Sigma\}$. Since $c_{\alpha} \neq 0$ and since we have $\dim \mathfrak{h} =n= |\Sigma|$,
 there exists $h \in \mathfrak{h}$ such that $c_{\alpha} = e^{\alpha(h)}$ for all $\alpha \in \Sigma$. But then $M \sim M'$ since $M' \simeq M^{\varphi}$ with $\varphi = exp(ad \; h)$ (compare with Proposition~\ref{twistprop}).
\end{proof}

\begin{thm}
\label{mainthm}
In type $C$, any $\uh$-free module of rank $1$ is isomorphic to $M_{0}^{\varphi}$ for some $\varphi \in Aut(\mathfrak{sp}(2n))$.
\end{thm}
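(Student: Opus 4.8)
My plan is to show that $M\sim M_{0}$ for every $M\in\mathfrak{M}$; since $\sim$ is an equivalence relation this is precisely the assertion of the theorem. I may assume $n\geq 2$ --- the rank-one symplectic algebra is of type $A$ and was treated in~\cite{Ni}. By Lemma~\ref{rootact} the module $M$ is determined by the polynomials $p_{\alpha}:=X_{\alpha}\cdot 1\in\poln$, $\alpha\in\Delta$, and I would first note each $p_{\alpha}$ is nonzero: if $p_{\alpha}=0$ then $X_{\alpha}$ kills $M$, so $[X_{\alpha},X_{-\alpha}]\cdot 1=X_{\alpha}\cdot p_{-\alpha}=0$, yet $[X_{\alpha},X_{-\alpha}]\in\mathfrak{h}\setminus\{0\}$ acts on $1$ by the nonzero polynomial $[X_{\alpha},X_{-\alpha}]$, contradicting $\uh$-freeness. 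Applying the weighting functor, Proposition~\ref{freegivescf} gives that $\ca{W}(M)$ is a coherent family of degree $1$; all its weight spaces being one-dimensional, it is automatically irreducible, so its semisimplification is the unique semisimple irreducible coherent family of degree $1$ of Proposition~\ref{cohprop}(ii). Thus $\ca{W}(M)^{ss}\simeq\ca{W}(M_{0})^{ss}$ for every $M\in\mathfrak{M}$.

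Next I would extract rigid information about the $p_{\alpha}$. By \eqref{actiondef} the root vector $X_{\alpha}$ acts on $\ca{W}(M)_{\lambda}$ by the scalar $\ol{\lambda+\alpha}(p_{\alpha})$, hence acts by zero there exactly when $\lambda+\alpha$ lies in the zero set $V(p_{\alpha}):=\{\mu\in\hstar:\ol{\mu}(p_{\alpha})=0\}$. Unwinding the description of the degree-one semisimplification recalled in Section~\ref{sec3}, the locus on which $X_{\alpha}$ acts by zero in $\ca{W}(M)^{ss}$ equals $(V(p_{\alpha})-\alpha)\cup V(p_{-\alpha})$; being $M$-independent, comparison with $M_{0}$ (whose $p_{\alpha}$ are the products of linear factors of Theorem~\ref{mnolldef}) confines each $V(p_{\alpha})$ to a prescribed union of at most two affine hyperplanes parallel to $P_{\alpha}$ --- for instance $V(p_{\pm 2\ep_{i}})$ is contained in the union of the hyperplanes $\lambda(h_{i})=\pm\frac{1}{2}$ and $\lambda(h_{i})=\pm\frac{3}{2}$, with analogous statements for $\pm(\ep_{i}\pm\ep_{j})$ --- together with a covering condition ensuring these loci jointly reproduce the subquotient structure of $\ca{W}(M)[\lambda_{0}]$ from Corollary~\ref{corsubs}. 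Since a submodule of $\ca{W}(M)[\lambda_{0}]$ is precisely a subset of $\lambda_{0}+Q$ closed under the non-vanishing root actions, and since the confinement shows the downward vectors $X_{-2\ep_{i}},X_{-\ep_{i}-\ep_{j}}$ do not vanish on that coset away from the walls on the positive side, I would conclude (by the same reasoning as for $M_{0}$; cf.\ Example~\ref{expic} and the use already made of this in Proposition~\ref{twistprop}) that $\ca{W}(M)[\lambda_{0}]$ has a unique minimal nonzero submodule, whose support is one of the $2^{n}$ orthants $(\lambda_{0}+Q)\cap\bigcap_{i}S_{\pm\ep_{i}}$.

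Then I would normalise $M$. By Proposition~\ref{twistprop}(2) the automorphisms $\varphi_{k}$ act on root vectors through the reflections $w_{k}$ in the long-root hyperplanes $P_{\ep_{k}}$, and the $w_{k}$ generate the group of sign changes of the Weyl group; taking the product $\varphi=\prod_{k\in I}\varphi_{k}$ that carries the orthant just found onto $(\lambda_{0}+Q)\cap\bigcap_{i}S_{-\ep_{i}}$ and replacing $M$ by $M^{\varphi}$ --- which is legitimate since $M\sim M^{\varphi}$ --- I may assume $Supp(min(\ca{W}(M)[\lambda_{0}]))=Supp(min(\ca{W}(M_{0})[\lambda_{0}]))$. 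As this minimal submodule is then the simple all-minus orthant and is closed under the non-vanishing root actions, every positive root vector that would leave the orthant must vanish on the bounding walls; combined with the upper bounds of the previous paragraph this forces $V(p_{2\ep_{i}})=\{\lambda:\lambda(h_{i})\in\{\frac{1}{2},\frac{3}{2}\}\}$ and, likewise, the zero sets of $p_{\ep_{i}+\ep_{j}}$ and $p_{\ep_{i}-\ep_{j}}$ to agree with those of the corresponding polynomials of $M_{0}$.

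Finally I would pin the $p_{\alpha}$ down exactly, up to the rescaling freedom of Lemma~\ref{scaling}. Writing $p_{2\ep_{i}}=c(h_{i}-\frac{1}{2})^{a}(h_{i}-\frac{3}{2})^{b}$ with $a,b\geq1$, and (by the second paragraph and unique factorisation in $\poln$) $p_{-2\ep_{i}}=c'(h_{i}+\frac{1}{2})^{d}(h_{i}+\frac{3}{2})^{e}$ with $d,e\geq0$, the relation $[X_{2\ep_{i}},X_{-2\ep_{i}}]\cdot 1=-4h_{i}$ becomes, via Lemma~\ref{rootact},
\[p_{2\ep_{i}}\cdot\sigma_{i}^{2}(p_{-2\ep_{i}})-p_{-2\ep_{i}}\cdot\sigma_{i}^{-2}(p_{2\ep_{i}})=-4h_{i}.\]
The left side has leading $h_{i}$-degree $(a+e)+(b+d)-1$ with nonzero leading coefficient, so $(a+e)+(b+d)=2$; since $a,b\geq1$ this gives $a=b=1$, $d=e=0$, so $p_{2\ep_{i}}=c(h_{i}-\frac{1}{2})(h_{i}-\frac{3}{2})$, $p_{-2\ep_{i}}=c'$, and $cc'=1$ --- exactly as for $M_{0}$. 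The relations for $(\ep_{i}-\ep_{j},\ep_{j}-\ep_{i})$, $(\ep_{i}+\ep_{j},-\ep_{i}-\ep_{j})$ and mixed pairs such as $(2\ep_{i},-\ep_{i}-\ep_{j})$ and $(\ep_{i}-\ep_{j},\ep_{j}+\ep_{k})$ dispatch the remaining roots in the same fashion and propagate the scalar identities along a chain of simple roots, leaving $n$ free parameters; hence $X_{\alpha}\cdot 1_{M}=c_{\alpha}(X_{\alpha}\cdot 1_{M_{0}})$ with all $c_{\alpha}\in\bb{C}^{\times}$, so $M\sim M_{0}$ by Lemma~\ref{scaling}, and undoing the twist yields $M\simeq M_{0}^{\varphi'}$ for some $\varphi'\in Aut(\mathfrak{sp}(2n))$. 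I expect the main obstacle to be the passage in the second and third paragraphs from the soft fact $\ca{W}(M)^{ss}\simeq\ca{W}(M_{0})^{ss}$ to exact equalities of zero sets --- this needs both the semisimplification bounds and the fine submodule structure of $\ca{W}(M)[\lambda_{0}]$, in particular the uniqueness of its minimal submodule so that the Weyl normalisation is well-posed --- after which the commutation-relation bookkeeping of the last paragraph, though lengthy, is routine.
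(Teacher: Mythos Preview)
Your overall architecture is the same as the paper's: show $\ca{W}(M)^{ss}\simeq\ca{W}(M_{0})^{ss}$ via Proposition~\ref{cohprop}, pick a simple submodule of $\ca{W}(M)[\lambda_{0}]$ (whose support is one of the $2^{n}$ orthants), twist by the $\varphi_{k}$ of Proposition~\ref{twistprop} so that the all-minus orthant is a submodule, determine each $p_{\alpha}=X_{\alpha}\cdot 1$ up to a nonzero scalar, and finish with Lemma~\ref{scaling}. Both arguments share the same mild looseness about ``the unique minimal submodule'': what is actually used is only that \emph{some} simple submodule exists with orthant support, and finite length plus Corollary~\ref{corsubs} give that.

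The genuine difference is in how you pin down the $p_{\alpha}$. The paper observes that $X_{\alpha}X_{-\alpha}\in\ug_{0}$, so on a one-dimensional weight space its scalar action is unchanged by semisimplification; comparing $\ca{W}(M')^{ss}$ with $\ca{W}(M_{0})^{ss}$ therefore yields the \emph{exact} product identity
\[
p_{\alpha}\,\sigma_{\alpha}(p_{-\alpha}) \;=\; (X_{\alpha}\cdot 1_{M_{0}})\,\sigma_{\alpha}(X_{-\alpha}\cdot 1_{M_{0}})
\]
for every $\alpha$, and the divisibility forced by $N'$ being a submodule then splits the factors immediately --- one line per root pair, no degree counting. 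Your route instead extracts only the containment $(V(p_{\alpha})-\alpha)\cup V(p_{-\alpha})=(V(p_{\alpha}^{M_{0}})-\alpha)\cup V(p_{-\alpha}^{M_{0}})$ from the semisimplification, and then feeds the resulting factored form of the $p_{\alpha}$ into the commutator relations $[X_{\alpha},X_{-\alpha}]\cdot 1\in\mathfrak{h}$ to bound total degree and recover the scalar constraint. This works, but it reproduces by a longer calculation precisely the content of the single product equation above (indeed, your zero-locus identity is the vanishing locus of that equation, and your bracket identity is its shifted difference). The paper's $\ug_{0}$-trick is worth internalising: for degree-one coherent families, any element of $\ug_{0}$ gives a polynomial identity in the $p_{\alpha}$ that is invariant under $(-)^{ss}$, which is a more efficient way to transport information back from the unique semisimple family than reasoning about zero sets.
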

\begin{proof}
Let $M \in \mathfrak{M}$. Let $N$ be the unique minimal nonzero submodule of $\ca{W}(M)[\lambda_{0}]$ (see Corollary~\ref{corsubs}). Define $\Omega:=\{1 \leq i \leq n | Supp(N) \subset S_{\epsilon_{i}}\}$.
Note that the automorphisms $\varphi_{k}$ from Proposition~\ref{twistprop} stabilize $\mathfrak{h}$ and they commute, which lets us define $\varphi_{\Omega} := \prod_{i \in \Omega} \varphi_{i}$. 
We then have $M':=M^{\varphi_{\Omega}} \in \mathfrak{M}$, and $N':=\ca{W}(M')[(\lambda_{0} + Q) \bigcap_{1 \leq i \leq n}S_{-\epsilon_{i}}]$ is a submodule of $\ca{W}(M')[\lambda_{0}]$.
Now, by Lemma~\ref{rootact} we have $X_{2\epsilon_{i}}X_{-2\epsilon_{i}} \cdot 1_{M'}=(X_{2\epsilon_{i}}\cdot 1_{M'})\sigma_{i}^{2}(X_{-2\epsilon_{i}}\cdot 1_{M'})$ for each $1 \leq i \leq n$.
Thus $X_{2\epsilon_{i}}X_{-2\epsilon_{i}}$ acts on $\ca{W}(M')$ and also on $\ca{W}(M')^{ss}$ by 
\[X_{2\epsilon_{i}}X_{-2\epsilon_{i}} \cdot v_{\lambda} = \olambda \big( (X_{2\epsilon_{i}}\cdot 1_{M'})\sigma_{i}^{2}(X_{-2\epsilon_{i}}\cdot 1_{M'}) \big)v_{\lambda},\]
for each $\lambda \in \hstar$. But by Proposition~\ref{cohprop} we have $\ca{W}(M')^{ss} \simeq \ca{W}(M_{0})^{ss}$, and $X_{2\epsilon_{i}}X_{-2\epsilon_{i}}$ acts on $\ca{W}(M_{0})^{ss}$ by
 \[X_{2\epsilon_{i}}X_{-2\epsilon_{i}} \cdot v_{\lambda} = \olambda \big( (h_{i}-\frac{1}{2})(h_{i}-\frac{3}{2}) \big) v_{\lambda}.\]
We conclude that for each $1\leq i \leq n$ we have
 \[(X_{2\epsilon_{i}}\cdot 1_{M'})\sigma_{i}^{2}(X_{-2\epsilon_{i}}\cdot 1_{M'}) = (h_{i}-\frac{1}{2})(h_{i}-\frac{3}{2}).\]
Analogous arguments imply
 \[(X_{\epsilon_{i}+\epsilon_{j}}\cdot 1_{M'})\sigma_{i}\sigma_{j}(X_{-\epsilon_{i}-\epsilon_{j}}\cdot 1_{M'}) = (h_{i}-\frac{1}{2})(h_{j}-\frac{1}{2}),\]
and 
 \[(X_{\epsilon_{i}-\epsilon_{j}}\cdot 1_{M'})\sigma_{i}\sigma_{j}^{-1}(X_{-\epsilon_{i}+\epsilon_{j}}\cdot 1_{M'}) = (h_{i}-\frac{1}{2}).\]
But since $N'$ is a submodule of $\ca{W}(M')$ and $\lambda_{0}\in Supp(N')$, the action of $\mathfrak{sp}(2n)$ on $\ca{W}(M')$ satisfies
 $X_{2\epsilon_{i}} \cdot v_{\lambda_{0}} = X_{\epsilon_{i}+\epsilon_{j}} \cdot v_{\lambda_{0}}=X_{\epsilon_{i}-\epsilon_{j}} \cdot v_{\lambda_{0}}=0$
for all $1 \leq i,j \leq n$, $i \neq j$. Similarly, taking $j\neq i$ we have $\lambda_{0}-\epsilon_{i}-\epsilon_{j}\in Supp(N')$ implying $X_{2\epsilon_{i}} \cdot v_{\lambda_{0}-\epsilon_{i}-\epsilon_{j}}=0$ for all $1 \leq i \leq n$.
Taken together, this shows that $(h_{i}-\frac{1}{2})$ is a factor of $(X_{2\epsilon_{i}}\cdot 1_{M'}), (X_{\epsilon_{i}+\epsilon_{j}}\cdot 1_{M'})$ and $(X_{\epsilon_{i}-\epsilon_{j}}\cdot 1_{M'})$, while
  $(h_{i}-\frac{3}{2})$ is a factor of $(X_{2\epsilon_{i}}\cdot 1_{M'})$. Thus the action of $\mathfrak{sp}(2n)$ on $M'$ is determined up to scalar multiples. Since action of root vectors on $M'$ and $M_{0}$
 differ only by scalar multiples, Lemma~\ref{scaling} implies $M' \sim M_{0}$. By definition we have $M \sim M'$, so we also have $M \sim M_{0}$.
\end{proof}

\begin{cor}
\label{maincor1}
 In type $C$, all objects of $\mathfrak{M}$ are simple and have the same central character.
\end{cor}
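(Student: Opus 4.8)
The plan is to deduce this immediately from Theorem~\ref{mainthm} together with Proposition~\ref{simpleprop}, using the elementary fact that twisting a module by an automorphism of $\mathfrak{g}$ does not change its lattice of submodules. So the corollary should require no new computation.

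First I would invoke Theorem~\ref{mainthm}: every $M \in \mathfrak{M}$ is isomorphic to $M_{0}^{\varphi}$ for a suitable $\varphi \in \mathrm{Aut}(\mathfrak{sp}(2n))$. Since isomorphic modules are simultaneously simple or not, it suffices to show that $M_{0}^{\varphi}$ is simple for every such $\varphi$. Next I would observe that $M_{0}$ and $M_{0}^{\varphi}$ share the same underlying vector space, and that a subspace $N$ is a submodule of $M_{0}^{\varphi}$ under the twisted action $x \bullet m = \varphi(x)\cdot m$ precisely when $\varphi(x)\cdot n \in N$ for all $x \in \mathfrak{g}$ and $n \in N$; as $\varphi$ is a bijection of $\mathfrak{g}$, this is equivalent to $y\cdot n \in N$ for all $y \in \mathfrak{g}$, i.e.\ to $N$ being a submodule of $M_{0}$. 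Hence the submodule lattices of $M_{0}$ and $M_{0}^{\varphi}$ coincide, and $M_{0}^{\varphi}$ is simple because $M_{0}$ is simple by Proposition~\ref{simpleprop}.

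There is essentially no obstacle here: the substantive work has already been carried out in establishing Theorem~\ref{mainthm} and Proposition~\ref{simpleprop}, and this corollary is a purely formal consequence. The only point requiring a moment's care is that the twisting automorphism genuinely lies in $\mathrm{Aut}(\mathfrak{sp}(2n))$ and stabilizes $\mathfrak{h}$, so that $M_{0}^{\varphi}$ is well defined and remains in $\mathfrak{M}$ — but this is already part of the content of Theorem~\ref{mainthm}.
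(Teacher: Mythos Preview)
Your proposal is correct and matches the paper's own proof: the paper also deduces the corollary directly from Proposition~\ref{simpleprop} and Theorem~\ref{mainthm}, noting that twisting by an automorphism defines an auto-equivalence (equivalently, preserves the submodule lattice) and hence preserves simplicity. No additional argument is needed.
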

\begin{proof}
 This follows directly from Proposition~\ref{simpleprop} and Theorem~\ref{mainthm} since each automorphism (from Theorem~\ref{mainthm}) twisting $M_{0}$ defines an auto-equivalence on $\mathfrak{M}$.
\end{proof}

 Following the notation of~\cite{BM} we recall that a Whittaker pair consists of a pair of two Lie algebras $(\mathfrak{g},\mathfrak{n})$ such that $\mathfrak{n}$ is a subalgebra of $\mathfrak{g}$, 
$\mathfrak{n}$ is quasi-nilpotent,
 and the adjoint action of
 $\mathfrak{n}$ on $\mathfrak{g} / \mathfrak{n}$ is locally nilpotent. A generalized Whittaker module for a fixed Whittaker pair $(\mathfrak{g},\mathfrak{n})$
 is a $\mathfrak{g}$-module $M$ on which the action of $\mathfrak{n}$ is locally finite, in other words $\dim \: \mathcal{U}(\mathfrak{n})v < \infty$ for all $v \in M$. For details, see~\cite{BM}.

\begin{cor}
\label{maincor2}
 In type $C$, all modules of $\mathfrak{M}$ are generalized Whittaker modules. The module $M_{0}^{\varphi}$ is a generalized Whittaker module for the Whittaker pair $(\mathfrak{g},\varphi(\mathfrak{n}))$, where
 $\mathfrak{n}:=Span\{X_{-\epsilon_{i}-\epsilon_{j}} | 1 \leq i,j\leq n\}$.
\end{cor}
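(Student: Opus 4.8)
The plan is to unwind the definition of a generalized Whittaker module and to reduce everything, via Theorem~\ref{mainthm}, to a single observation about $M_{0}$. First I would check that $(\mathfrak{g},\mathfrak{n})$ is a Whittaker pair. Writing $X_{-2\epsilon_{i}}=X_{-\epsilon_{i}-\epsilon_{i}}$, the subspace $\mathfrak{n}=Span\{X_{-\epsilon_{i}-\epsilon_{j}}\mid 1\le i,j\le n\}$ is the span of the root vectors attached to the roots $-\epsilon_{i}-\epsilon_{j}$. For any two of these roots the sum $-\epsilon_{i}-\epsilon_{j}-\epsilon_{k}-\epsilon_{l}$ is nonzero and is not a root of $C_{n}$ (every root of $C_{n}$ has the form $\pm\epsilon_{p}\pm\epsilon_{q}$ or $\pm 2\epsilon_{p}$), so $[X_{-\epsilon_{i}-\epsilon_{j}},X_{-\epsilon_{k}-\epsilon_{l}}]\in\mathfrak{g}_{-\epsilon_{i}-\epsilon_{j}-\epsilon_{k}-\epsilon_{l}}=0$. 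Hence $\mathfrak{n}$ is an abelian, in particular nilpotent, subalgebra of $\mathfrak{g}$; and since $\mathfrak{g}$ is finite dimensional the adjoint action of $\mathfrak{n}$ on $\mathfrak{g}/\mathfrak{n}$ is automatically locally finite. So $(\mathfrak{g},\mathfrak{n})$ is a Whittaker pair.

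The only step carrying any content is showing that $M_{0}$ is a generalized Whittaker module for $(\mathfrak{g},\mathfrak{n})$, i.e.\ that $\dim \mathcal{U}(\mathfrak{n})f<\infty$ for every $f\in M_{0}=\poln$. By the action table in Theorem~\ref{mnolldef}, each spanning vector of $\mathfrak{n}$ acts on $M_{0}$ purely by a shift automorphism, $X_{-2\epsilon_{i}}\cdot f=\sigma_{i}^{-2}(f)$ and $X_{-\epsilon_{i}-\epsilon_{j}}\cdot f=\sigma_{i}^{-1}\sigma_{j}^{-1}(f)$, with no multiplication by a polynomial in the $h_{k}$. Each such operator preserves the total degree of a polynomial, so for every $d$ the finite-dimensional space of polynomials of degree at most $d$ is stable under $\mathfrak{n}$, hence under $\mathcal{U}(\mathfrak{n})$. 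Therefore $\mathcal{U}(\mathfrak{n})f$ is finite dimensional for each $f$, so the action of $\mathfrak{n}$ on $M_{0}$ is locally finite.

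Finally I would transport this along the classifying automorphism. By Theorem~\ref{mainthm} any $M\in\mathfrak{M}$ is isomorphic to $M_{0}^{\varphi}$ for a suitable $\varphi\in Aut(\mathfrak{sp}(2n))$, and since $\varphi$ is a Lie algebra automorphism, $\varphi(\mathfrak{n})$ is again an abelian subalgebra spanned by root vectors, so $(\mathfrak{g},\varphi(\mathfrak{n}))$ is a Whittaker pair. Under the twisted action $x\bullet m=\varphi(x)\cdot m$, a subspace of $M_{0}$ is $\mathcal{U}(\mathfrak{k})$-stable for $\bullet$ precisely when it is $\mathcal{U}(\varphi(\mathfrak{k}))$-stable for the original action; applying the previous paragraph with $\mathfrak{k}=\varphi^{-1}(\mathfrak{n})$ gives that $\varphi^{-1}(\mathfrak{n})$ acts locally finitely on $M_{0}^{\varphi}$, and a short check — the automorphisms produced by Theorem~\ref{mainthm} are composites of root-space rescalings and commuting involutions, so $\varphi$ and $\varphi^{-1}$ carry $\mathfrak{n}$ to the same subalgebra — identifies this subalgebra with $\varphi(\mathfrak{n})$. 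Hence $M_{0}^{\varphi}$, and therefore every object of $\mathfrak{M}$, is a generalized Whittaker module for the stated pair. The one genuine obstacle is the observation of the middle paragraph: once one sees that the negative root vectors spanning $\mathfrak{n}$ act on $M_{0}$ by pure shifts, the degree filtration does all the work, and the remaining points are bookkeeping.
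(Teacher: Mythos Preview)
Your argument is correct and follows the same line as the paper's proof: $\mathfrak{n}$ is abelian (hence nilpotent), the adjoint action on $\mathfrak{g}/\mathfrak{n}$ is locally finite, and the generators of $\mathfrak{n}$ act on $M_{0}$ by pure shifts and therefore preserve the degree filtration, giving local finiteness. You are in fact more careful than the paper on the final step: where the paper simply writes ``Similarly, $M_{0}^{\varphi}$ is a generalized Whittaker module for $(\mathfrak{g},\varphi(\mathfrak{n}))$'', you correctly observe that the twist naturally produces $\varphi^{-1}(\mathfrak{n})$ and then verify that $\varphi(\mathfrak{n})=\varphi^{-1}(\mathfrak{n})$ for the specific automorphisms arising from Theorem~\ref{mainthm}.
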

\begin{proof}
Note that $\mathfrak{n}$ is a commutative (hence nilpotent) subalgebra of $\mathfrak{sp}(2n)$ of dimension $\frac{1}{2}n(n+1)$.
Moreover, the adjoint action of $\mathfrak{n}$ on $\mathfrak{sp}(2n) / \mathfrak{n}$ is nilpotent. Thus  $(\mathfrak{sp}(2n),\mathfrak{n})$ is a Whittaker pair.
The action of $\mathfrak{n}$ on $M_{0}$ is clearly locally finite by Theorem~\ref{actiondef}, since $\mathfrak{n}$ never increases the degree of a polynomial. This means that
 $M_{0}$ is a generalized Whittaker module for the Whittaker pair $(\mathfrak{g},\mathfrak{n})$.
 Similarly, $M_{0}^{\varphi}$ is a generalized Whittaker module for the Whittaker pair $(\mathfrak{g},\varphi(\mathfrak{n}))$.
\end{proof}

\begin{rmk}
 Note that none of Corollaries~\ref{maincor1} and~\ref{maincor2} hold in type $A$, see~\cite{Ni}.
\end{rmk}

\subsection{Construction of $\uh$-free modules of higher rank}
\label{sec6}
The idea of this section is to apply translation functors to $\uh$-free modules of rank $1$ to obtain $\uh$-free modules of higher rank.
 This is analogous to what Mathieu does for coherent families, see ~\cite[p.584]{Mathieu}, and it produces a large set of $\uh$-free modules.

Let $\Theta = Hom_{\mathfrak{g}}(Z(\mathfrak{g}),\bb{C})$ be the set of central characters. We have a map $\chi: \hstar \rightarrow \Theta$ which maps $\lambda \in \hstar$ to the central character of
the Verma module $M(\lambda)$. For any module $M$, we write $M^{\theta}$ for the maximal submodule of $M$ having generalized central character $\theta$.
Now, for $\theta \in \Theta$ we denote by $\mathfrak{F}(\theta)$ the full subcategory of $\mathfrak{sp}(2n)$-Mod consisting of simple
 modules which are free of finite rank when restricted to $\uh$, and whose central character is $\theta$.
Denote by $\ca{CF}$ the full subcategory of $\mathfrak{sp}(2n)$-Mod consisting of coherent families.
Let $\theta_{0}$ be the central character of $\ca{W}(M_{0})$.
We assume that $\theta$ is such that $\mathfrak{F}(\theta)$ is nonempty. This means that $\ca{W}(\mathfrak{F}(\theta))$ and $\ca{W}(\mathfrak{F}(\theta_{0}))$ both contains coherent families with
 central characters $\theta$ and $\theta_{0}$ respectively, as $\ca{W}$ preserves central characters (Lemma~\ref{weightinglemma}\eqref{weight4}). 

Now in our case, as in~\cite{BG}, we can fix $\Lambda \in \chi^{-1}(\theta_{0})-\chi^{-1}(\theta) \subset \hstar$ as integral and dominant,
 so that the simple highest weight module $L(\Lambda)$ of highest weight $\Lambda$ is finite-dimensional.
It follows as in~\cite[p.584]{Mathieu} that the translation functor 
 $F_{\Lambda}: \mathfrak{sp}(2n)\text{-Mod}(\theta) \rightarrow \mathfrak{sp}(2n)\text{-Mod}(\theta_{0})$ which maps $M \mapsto (M \otimes L(\Lambda))^{\theta_{0}}$ is an equivalence of categories.

\begin{lemma}
 Consider the following diagram in the category of categories:
\[ \xymatrix@=2cm{
 \mathfrak{F}(\theta)        \ar[r]^{F_{\Lambda}}  \ar[d]_{\ca{W}}    & \mathfrak{F}(\theta_{0}) \ar[d]_{\ca{W}}  \\
 \ca{CF} \ar[r]^{F_{\Lambda}}                     & \ca{CF}  } \]
Both horizontal arrows are equivalences of categories, and the diagram commutes: 
for each $M \in \mathfrak{F}(\theta)$ we have $(\ca{W} \circ F_{\Lambda})(M) \simeq (F_{\Lambda} \circ \ca{W})(M)$.
\end{lemma}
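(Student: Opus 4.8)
The statement has two parts --- that the two horizontal $F_{\Lambda}$'s are equivalences, and that the square commutes after semisimplification --- and I would treat them separately. For the first part the plan is simply to restrict the already-established equivalence $F_{\Lambda}\colon\mathfrak{sp}(2n)\text{-Mod}(\theta)\to\mathfrak{sp}(2n)\text{-Mod}(\theta_{0})$ to the four corner categories; for the second, the plan is to isolate and prove a single natural isomorphism, $\ca{W}(N\otimes L)\cong\ca{W}(N)\otimes L$ for $N$ in $\ug\text{-Mod}$ and $L$ a finite-dimensional $\ug$-module, and then feed it into $F_{\Lambda}$.

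\textbf{The horizontal arrows.} An equivalence of abelian categories preserves simple objects, and $F_{\Lambda}$ maps $\mathfrak{sp}(2n)\text{-Mod}(\theta)$ into $\mathfrak{sp}(2n)\text{-Mod}(\theta_{0})$ by construction, so for the top arrow I only need to check that $F_{\Lambda}(M)=(M\otimes L(\Lambda))^{\theta_{0}}$ is $\uh$-free of finite rank when $M$ is. Here $\mathrm{Res}_{\uh}\big(M\otimes L(\Lambda)\big)\cong\mathrm{Res}_{\uh}(M)\otimes_{\bbc}L(\Lambda)$ is $\uh$-free of finite rank since $L(\Lambda)$ is finite-dimensional, and $F_{\Lambda}(M)$ is a $\uh$-module direct summand of it (the generalized central character decomposition is one of $\ug$-modules, hence of $\uh$-modules); thus $F_{\Lambda}(M)$ is finitely generated projective over $\uh\cong\bbc[h_{1},\dots,h_{n}]$, hence free by Quillen--Suslin. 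Applying the same reasoning to the quasi-inverse translation functor shows $F_{\Lambda}$ restricts to an equivalence $\mathfrak{F}(\theta)\xrightarrow{\ \sim\ }\mathfrak{F}(\theta_{0})$. For the bottom arrow I would invoke Mathieu's construction directly: tensoring an irreducible coherent family by $L(\Lambda)$ and taking the $\theta_{0}$-component produces again an irreducible coherent family, and this is an equivalence on $\ca{CF}_{irr}$, see~\cite[p.~584]{Mathieu}. I also need the vertical $\ca{W}$'s to land in $\ca{CF}_{irr}$: Proposition~\ref{freegivescf} already gives a coherent family of the right degree for each object of $\mathfrak{F}(\theta)$, and its irreducibility I would deduce from simplicity of $M$ as in~\cite{Mathieu} (alternatively by bootstrapping from the rank-one case, where $\ca{W}(M_{0})_{\lambda}$ is one-dimensional and hence trivially $\ug_{0}$-simple, using the commutativity statement below together with the fact that translation preserves irreducibility of coherent families).

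\textbf{Commutativity.} First I would prove $\ca{W}(N\otimes L)\cong\ca{W}(N)\otimes L$, natural in $N$. Writing $L=\bigoplus_{\nu}L_{\nu}$ for the $\uh$-weight decomposition, each $N\otimes L_{\nu}$ is a $\uh$-stable subspace of $N\otimes L$ isomorphic, as a $\uh$-module, to $\dim L_{\nu}$ copies of $N$ with the action twisted by the translation automorphism $h\mapsto h+\nu(h)$ of $\uh$; a short computation then gives
\[
(N\otimes L_{\nu})/\ker(\overline{\lambda})(N\otimes L_{\nu})\ \cong\ \big(N/\ker(\overline{\lambda-\nu})N\big)\otimes L_{\nu},
\]
so that summation over $\nu$ identifies $\ca{W}(N\otimes L)_{\lambda}$ with $\bigoplus_{\nu}\ca{W}(N)_{\lambda-\nu}\otimes L_{\nu}=(\ca{W}(N)\otimes L)_{\lambda}$; a direct check with the defining formula~\eqref{actiondef} and the coproduct action of a root vector on a tensor product shows this identification is $\ug$-linear and natural in $N$. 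Granting this, and using that $\ca{W}$ is additive, commutes with the (finite) generalized central character decomposition, and preserves generalized central characters (Lemma~\ref{weightinglemma}\eqref{weight5}), I obtain
\[
\ca{W}\big(F_{\Lambda}(M)\big)=\ca{W}\big(M\otimes L(\Lambda)\big)^{\theta_{0}}\cong\big(\ca{W}(M)\otimes L(\Lambda)\big)^{\theta_{0}}=F_{\Lambda}\big(\ca{W}(M)\big),
\]
which is an honest isomorphism, hence in particular survives semisimplification.

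\textbf{Main obstacle.} I expect the delicate step to be the natural isomorphism $\ca{W}(N\otimes L)\cong\ca{W}(N)\otimes L$: the underlying linear bijection is routine, but one must be careful about the direction of the translation automorphism and then verify weight space by weight space that the $\ug$-actions on the two sides agree, and that the isomorphism is natural in $N$ (so that it is compatible with passing to the $\theta_{0}$-component). A secondary point needing attention is confirming that $\ca{W}$ genuinely lands in $\ca{CF}_{irr}$, i.e.\ that $\ca{W}$ sends a simple $\uh$-free module of finite rank to an \emph{irreducible} coherent family rather than merely a coherent family.
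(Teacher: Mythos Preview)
Your argument for the horizontal arrows is essentially the paper's: restrict the Bernstein--Gelfand equivalence, use Quillen--Suslin to show $F_{\Lambda}$ preserves $\uh$-freeness, and cite \cite{Mathieu} for stability of irreducible coherent families under translation.

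For the commutativity, however, you take a genuinely different route. The paper argues as follows: going around either way lands in an irreducible coherent family with central character $\theta_{0}$, and by Mathieu's classification there is a unique such object up to semisimplification, so the two compositions agree after $(-)^{ss}$. You instead prove a pointwise natural isomorphism $\ca{W}(N\otimes L)\cong\ca{W}(N)\otimes L$ by decomposing $L$ into weight spaces and tracking the shift $\ker(\overline{\lambda})\leadsto\ker(\overline{\lambda-\nu})$ on each $N\otimes L_{\nu}$, and then conclude $\ca{W}\circ F_{\Lambda}\cong F_{\Lambda}\circ\ca{W}$ on the nose using additivity of $\ca{W}$ and Lemma~\ref{weightinglemma}\eqref{weight5}. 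Your computation is correct (in particular the $\ug$-linearity check goes through because $x_{\alpha}L_{\nu}\subset L_{\nu+\alpha}$ matches the shift in the quotient), and it yields a strictly stronger conclusion than the lemma asks for: an honest isomorphism, not merely one after semisimplification. It also makes the commutativity independent of Mathieu's uniqueness result and of knowing in advance that $\ca{W}(M)$ is irreducible. The paper's approach, by contrast, is shorter and avoids introducing the auxiliary isomorphism, at the cost of invoking the classification. The one place you still lean on irreducibility is to justify that the vertical arrows land in $\ca{CF}_{irr}$; you flag this correctly, and your bootstrapping suggestion (deduce it from the rank-one case via the commutativity you have just established) is a clean way to close that loop.
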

\begin{proof}
The $F_{\Lambda}$'s above are restrictions of an equivalence functor (see~\cite{BG}), so to prove that they are equivalences, it suffices to check that $F_{\Lambda}$ maps $\uh$-free modules to $\uh$-free modules,
 and that it maps coherent families to coherent families.
 If $M$ is $\uh$-free of finite rank, then so is clearly $M \otimes L(\Lambda)$, so $(M \otimes L(\Lambda))^{\theta_{0}}$ is projective in $\uh$-Mod. 
 But by the Quillen-Suslin theorem (see for example~\cite{Qu}), every projective module over a polynomial ring is free, so $F_{\Lambda}(M)=(M \otimes L(\Lambda))^{\theta_{0}}$ is $\uh$-free. 
That coherent families are stable under translation functors follows from~\cite{Mathieu}. Thus the horizontal arrows are equivalences.

Now whenever $E$ is a finite dimensional module and $M$ is free over $\uh$ we claim that the $\mathfrak{g}$-modules $\ca{W}(M)\otimes_{\bb{C}} E$ and $\ca{W}(M\otimes_{\bb{C}} E)$ are isomorphic.
We verify this by exhibiting explicit inverse morphisms. Let $\varphi: \ca{W}(M)\otimes_{\bb{C}} E \rightarrow \ca{W}(M\otimes_{\bb{C}} E)$ be defined by
\[\varphi((m+ker(\ol{\mu})M)\otimes v_{\lambda}) = m\otimes v_{\lambda} + ker(\ol{\mu + \lambda})(M\otimes E),\] where $v_{\lambda}$ is a weight vector in $E$ of weight $\lambda$.
Similarly we define $\psi: \ca{W}(M\otimes_{\bb{C}} E) \rightarrow \ca{W}(M)\otimes_{\bb{C}} E$  by
\[\psi(m\otimes v_{\lambda} + ker(\ol{\mu})(M\otimes E)) = (m +ker(\ol{\mu-\lambda})M)\otimes v_{\lambda}.\]
One can now check that $\varphi$ and $\psi$ are mutually inverse $\mathfrak{g}$-module homomorphisms.
Thus $\ca{W}(M)\otimes E \simeq \ca{W}(M\otimes E)$, and in particular if we take $E:=L(\Lambda)$ and project to the block of character $\theta_{0}$
 we have $(\ca{W}(M)\otimes L(\Lambda))^{\theta_{0}} \simeq (\ca{W}(M\otimes L(\Lambda)))^{\theta_{0}}$.
Since $\ca{W}$ preserves central character (Lemma~\ref{weightinglemma}) we also have $(\ca{W}(M)\otimes L(\Lambda))^{\theta_{0}} \simeq \ca{W}((M\otimes L(\Lambda))^{\theta_{0}})$,
 which is to say $(\ca{W} \circ F_{\Lambda})(M) \simeq (F_{\Lambda} \circ \ca{W})(M)$. This completes the proof.
\end{proof}

\begin{cor}
There exist simple $\uh$-free modules of rank higher than $1$.
\end{cor}
\begin{proof}
For a fixed character $\theta \neq \theta_{0}$ as in the lemma, consider the modules of form $F_{\Lambda}^{-1}(M_{0}^{\varphi})$ where $M_{0}^{\varphi}$ is any rank one $\uh$-free module as in Theorem~\ref{mainthm}.
The modules of form $F_{\Lambda}^{-1}(M_{0}^{\varphi})$ are simple objects of $\mathfrak{F}(\theta)$, and since all rank one $\uh$-free modules have central character $\theta_{0}$, the modules $F_{\Lambda}^{-1}(M_{0}^{\varphi})$ 
must be of rank higher than $1$.
\end{proof}

\noindent Department of Mathematics, Uppsala University, Box 480, SE-751 06, Uppsala, Sweden, email: jonathan.nilsson@math.uu.se

\end{document}